\documentclass[]{article}

\usepackage[utf8]{inputenc}
\usepackage[english]{babel}
\usepackage{pdfpages}
\usepackage[a4paper]{geometry}
\usepackage{graphicx}
\usepackage{amsmath,amsthm,amssymb}
\usepackage{wasysym}
\usepackage{microtype}
\usepackage[colorlinks=false, pdfborder={0 0 0}]{hyperref}
\usepackage[capitalise]{cleveref}
\usepackage{romannum}
\usepackage[symbol]{footmisc}

\newcommand{\footremember}[2]{%
   \footnote{#2}
    \newcounter{#1}
    \setcounter{#1}{\value{footnote}}%
}
\newcommand{\footrecall}[1]{%
    \footnotemark[\value{#1}]%
}

\title{The Haagerup property and actions on von Neumann algebras}
\author{\stepcounter{footnote}Thiebout Delabie\footremember{alley}{Université de Neuchâtel, the authors are supported by grant 200021$\_$188578 of the Swiss National Fund for Scientific Research}, Alexandre Zumbrunnen\footrecall{alley}
}

\date{\today}

\newtheorem{theorem}{Theorem}[section]
\newtheorem{lemma}[theorem]{Lemma}
\newtheorem{proposition}[theorem]{Proposition}
\newtheorem{corollary}[theorem]{Corollary}
\newtheorem{definition}[theorem]{Definition}
\newtheorem{remark}[theorem]{Remark}
\newtheorem{example}[theorem]{Example}
\newtheorem{question}[theorem]{Question}

\newcommand{\N}{\mathbb{N}}
\newcommand{\Z}{\mathbb{Z}}

\newcommand{\R}{\mathbb{R}}
\newcommand{\C}{\mathbb{C}}
\newcommand{\E}{\mathbb{E}}
\newcommand{\T}{(\operatorname{T})}
\newcommand{\LL}{\operatorname{L}}
\newcommand{\Tr}{\operatorname{Tr}}
\newcommand{\B}{\operatorname{B}}
\newcommand{\rom}[1]{{\normalfont\Romannum{#1}}}

\begin{document}

\pagenumbering{arabic}

\maketitle

\begin{abstract}
In Chapter $2$ of "Groups with the Haagerup Property" \cite{cherix2012groups}, Jolissaint gives on the one hand a characterization of the Haagerup property in terms of strongly mixing actions on standard probability spaces; on the other hand he gives a noncommutative analogue of this result in terms of actions on factors. In the recent paper "A new characterization of the Haagerup property by actions on infinite measure spaces" \cite{delabie2019new}, the authors give a characterization of the Haagerup property but this time dealing with $C_0$-actions on infinite measure spaces. Following the spirit of this section, we give a noncommutative analogue in terms of $C_0$-actions on von Neumann algebras. 
Next we discuss some natural questions which remained open around $C_0$-dynamical systems. In particular we give examples of $C_0$-dynamical systems for groups acting properly on trees. Finally, we give a positive answer to the question of the ergodicity of such systems for non-periodic groups.
\end{abstract}

\section{Introduction}
The Haagerup property is a property of topological groups that has many different characterizations. Throughout this article, $G$ denotes a locally compact, second countable group which is assumed non-compact because the compact case is not relevant to this property \cite{cherix2012groups}.
Haagerup defined what is now called the H property in \cite{haagerup1978example}: motivated by the characterizations of nuclear $C^*$-algebras and injective von Neumann algebras in terms of existence of completely positive linear maps, and aiming to prove that complete positivity is essential in these results. 

Another characterization, first called a-$\T$-menability, was defined by Gromov in \cite{GromovATMenability}, as a strong negation for property $\T$.
It was later shown that these properties are equivalent.
For more information we refer to \cite{cherix2012groups}.

Recall that $G$ has the Haagerup property if and only if there exists a sequence of normalized, positive definite functions $(\varphi_n)_{n\ge 1}$ on $G$  such that $\varphi_n$ tends to the constant function $1$ uniformly on compact subsets of $G$ as $n \to \infty$, and each $\varphi_n \in C_0(G)$. In turn, $G$ is amenable if and only if each $\varphi_n$ can be chosen with compact support. Hence the Haagerup property is a weaker property than amenability, which itself generalizes both abelian and finite groups. Even though, there exists groups with the Haagerup property that are not amenable, free groups for example, there are a lot of properties of amenable groups that still hold for groups with the Haagerup property.
For example, groups with the Haagerup property satisfy the Baum-Connes conjecture \cite{higson2001theory}.

As von Neumann algebras are often seen as a non commutative version of measure spaces, since the space $\LL^\infty(\Omega,\mu)$ is a von Neumann algebra for every measure space $(\Omega,\mu)$, let us consider the following characterization of the Haagerup property :
\begin{theorem}[Theorem 2.1.3, \cite{cherix2012groups}]
	A locally compact second countable group has the Haagerup property if and only if it has a measure preserving action on a standard probability space that is strongly mixing and has a nontrivial asymptotically invariant sequence.
\end{theorem}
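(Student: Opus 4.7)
The plan is to prove the two implications separately, in both cases exploiting the correspondence between positive definite functions on $G$ and matrix coefficients of Koopman representations associated to measure preserving actions.

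For $(\Leftarrow)$, suppose $G$ acts on a standard probability space $(X,\mu)$ with the stated properties, and let $(A_n)$ be a nontrivial asymptotically invariant sequence. I would set
\[
\varphi_n(g) = \frac{\mu(gA_n \cap A_n) - \mu(A_n)^2}{\mu(A_n)(1-\mu(A_n))},
\]
which is exactly the matrix coefficient of the Koopman representation on $\LL^2_0(X,\mu)$ evaluated at the unit vector $(\chi_{A_n} - \mu(A_n))/\sqrt{\mu(A_n)(1-\mu(A_n))}$; hence $\varphi_n$ is normalized and positive definite. Strong mixing forces $\mu(gA_n \cap A_n) \to \mu(A_n)^2$ as $g \to \infty$, so $\varphi_n \in C_0(G)$; nontriviality provides a uniform lower bound on the denominator, and asymptotic invariance then yields $\varphi_n \to 1$ uniformly on compact subsets of $G$.

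For $(\Rightarrow)$, fix a proper conditionally negative definite function $\psi: G \to \R_{\ge 0}$, which exists by the Haagerup property. Via the Gaussian construction applied to the orthogonal representation with cocycle associated to $\psi$, one produces a measure preserving $G$-action on a standard Gaussian probability space whose Koopman representation, once restricted to the orthogonal complement of the constants, decomposes as a direct sum of symmetric tensor powers of the underlying representation. Properness of $\psi$ makes each such symmetric tensor power $C_0$, which yields strong mixing. To manufacture the asymptotically invariant sequence, I would pass to an infinite product of rescaled Gaussian actions with parameters $t_n \to 0^+$: the Wick exponentials of the cyclic vectors in each factor have matrix coefficients close to $1$ on compact sets, producing almost invariant $\LL^2_0$ vectors. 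A Connes--Weiss type level-set argument then converts these into measurable sets $A_n$ of measure bounded away from $0$ and $1$ which are asymptotically invariant.

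The principal obstacle is the final conversion from almost invariant $\LL^2$ vectors to honest asymptotically invariant measurable sets in the forward direction; this is precisely the spectral and measure-theoretic step that makes the classical Connes--Weiss theorem nontrivial. Verifying that the infinite product of strongly mixing Gaussian actions remains strongly mixing, and choosing the scaling sequence $(t_n)$ so that mixing and near-invariance are simultaneously preserved across the factors, are further technical points where care is required.
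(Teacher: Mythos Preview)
The paper does not contain a proof of this statement: it is quoted in the introduction as Theorem~2.1.3 of \cite{cherix2012groups}, purely as background and motivation for the paper's own non-commutative analogues (Theorems~\ref{noncommutative jol} and~\ref{thm:main}). There is therefore no proof in the paper to compare your proposal against.

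That said, your sketch is a faithful outline of the classical argument as it appears in the cited source. The backward direction via matrix coefficients on $\LL^2_0(X,\mu)$ is exactly the standard reduction, and you have correctly identified that nontriviality of the sequence is what keeps the denominator bounded away from zero. For the forward direction, the Gaussian construction from a proper conditionally negative definite function, followed by an infinite product of rescaled copies to obtain almost invariant vectors, and then the Connes--Weiss level-set argument to pass to genuine sets, is precisely the architecture of the original proof in \cite{cherix2012groups}. You are also right that the Connes--Weiss step is the one place requiring real work; the mixing of the infinite product is comparatively routine once each factor is mixing, since matrix coefficients on the orthocomplement of constants in a tensor product are controlled by products of factor coefficients.
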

This can be generalized to include the non-commutative case.
This is done in Chapter 2 of \cite{cherix2012groups}, where theorem 2.1.5 states the following:
\begin{theorem}\label{noncommutative jol}
	Let $G$ be a locally compact second countable group. If there exists an action of $G$ on a von Neumann algebra $N$ with a faithful normal state $\varphi$ that is $G$-invariant, such that the action is strongly mixing and has a nontrivial asymptotically invariant sequence, then $G$ has the Haagerup property.
	\\
	Conversely, if $G$ has the Haagerup property, then it has such an action on the following von Neumann algebras:
	\begin{enumerate}
		\item $N = R$ (where $R$ is the hyperfinite \rom{2}$_1$ factor);
		\item $N = R\otimes \B(\ell^2)$;
		\item $N = R_\lambda$ (where $R_\lambda$ is the Powers factor).
	\end{enumerate}
\end{theorem}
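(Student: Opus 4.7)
The plan is to treat the two implications separately, using the GNS construction for the forward direction and second-quantisation constructions for the converse.

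For the forward direction, I would pass through the GNS construction applied to $\varphi$. This yields a Hilbert space $\LL^2(N,\varphi)$ with a cyclic and separating unit vector $\xi_\varphi$ implementing $\varphi$, and the $G$-invariance of $\varphi$ makes the action $\alpha$ lift to a unitary representation $\pi$ of $G$ on $\LL^2(N,\varphi)$ that fixes $\xi_\varphi$. Strong mixing of $\alpha$ translates (by polarisation and a density argument from $N$ to $\LL^2(N,\varphi)$) into the statement that every matrix coefficient of $\pi$ on $\LL^2(N,\varphi)\ominus \C\xi_\varphi$ belongs to $C_0(G)$. A nontrivial asymptotically invariant sequence $(x_n)$ in $N$ yields, after subtracting $\varphi(x_n)\xi_\varphi$ and renormalising, a sequence of unit vectors $\eta_n$ orthogonal to $\xi_\varphi$ with $\|\pi(g)\eta_n-\eta_n\|\to 0$ uniformly on compacts. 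Then $\varphi_n(g) := \langle \pi(g)\eta_n,\eta_n\rangle$ is a sequence of $C_0$ normalised positive definite functions on $G$ converging to $1$ uniformly on compacts, which witnesses the Haagerup property.

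For the converse, one starts from a $C_0$ unitary representation $\pi$ of $G$ on a Hilbert space $H$ with almost invariant unit vectors, available because $G$ has the Haagerup property. I would construct the required actions through a second-quantisation functor $\Gamma$. For $N=R$, the Gaussian (or CAR) construction applied to the real structure of $\pi$ produces a trace-preserving action $\Gamma(\pi)$ on the hyperfinite $\mathrm{II}_1$ factor; strong mixing of $\Gamma(\pi)$ is equivalent to $\pi$ being $C_0$ on the one-particle space, and almost invariant unit vectors of $H$ lift through the first chaos to a nontrivial asymptotically invariant sequence in $R$. The case $N=R\otimes \B(\ell^2)$ then follows by tensoring this action with the trivial action on $\B(\ell^2)$ equipped with a suitable normal faithful state. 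For the Powers factor $N=R_\lambda$, I would use the quasi-free CAR construction with covariance chosen so that the resulting action commutes with the modular automorphism group of a fixed KMS state.

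The main obstacle I anticipate is the non-tracial Powers factor case: since $R_\lambda$ admits no $G$-invariant trace, one must arrange the quasi-free CAR construction to preserve a KMS state rather than a trace. This requires enlarging $\pi$ so that it intertwines with a one-parameter ``modular'' symmetry, and then verifying that the $C_0$ property and the almost-invariant vectors still lift through this twisted construction to strong mixing and a nontrivial asymptotically invariant sequence in $R_\lambda$. Once this functorial picture is in place, the tracial case on $R$ and the amplification to $R\otimes \B(\ell^2)$ are comparatively routine.
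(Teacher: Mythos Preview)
The paper does not prove this theorem. Theorem~\ref{noncommutative jol} is quoted verbatim from Chapter~2 of \cite{cherix2012groups} (Theorem~2.1.5, due to Jolissaint) as background motivation; the present paper's contribution is the $C_0$ analogue (Theorem~\ref{thm:main}), and no argument for Theorem~\ref{noncommutative jol} itself appears here. So there is no ``paper's own proof'' to compare against.

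As for your proposal on its own merits: the forward direction is correct and is essentially how Jolissaint argues --- GNS, restrict to $\LL^2(N,\varphi)\ominus\C\xi_\varphi$, read off $C_0$ positive definite functions from the asymptotically invariant sequence. For the converse, your second-quantisation sketch is a legitimate route, but it is not the one taken in \cite{cherix2012groups}: there the actions on $R$, $R\otimes\B(\ell^2)$ and $R_\lambda$ are built as infinite tensor product (Bernoulli-type) actions coming from a sequence of $C_0$ positive definite functions, rather than from a single $C_0$ representation fed into a CAR/Gaussian functor. Both approaches work; the infinite-tensor-product construction has the advantage that the Powers factor case requires no extra ``modular intertwining'' step --- one simply tensors copies of $M_2(\C)$ with the Powers state and lets $G$ act through the positive definite functions, so the obstacle you flag essentially does not arise in that framework.

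One small gap in your sketch of the forward direction: you should check that the nontriviality condition on the asymptotically invariant sequence guarantees that $x_n-\varphi(x_n)1$ stays bounded away from $0$ in $\|\cdot\|_\varphi$, so that the renormalised vectors $\eta_n$ exist; this is exactly what ``nontrivial'' is designed to ensure, but it should be said.
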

Note that in the list above we have von Neumann algebras of types \rom{2}$_1$, \rom{2}$_\infty$ and \rom{3}.
\\
Similarly, we can consider Theorem 1.5 in \cite{delabie2019new}, which states the following (will be specified in \cref{THM DJZ}) :
\begin{theorem}\label{thm:Omega}
	A locally compact second countable group has the Haagerup property if and only if it has a measure preserving action on a measure space that is $C_0$ and has an almost invariant sequence.
\end{theorem}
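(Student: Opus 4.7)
The plan is to prove each implication separately, using the characterization of the Haagerup property via a sequence of normalized positive definite functions in $C_0(G)$ converging to $1$ uniformly on compact subsets of $G$.

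For the ``if'' direction, I would pass from the action to positive definite functions. Given a measure preserving action of $G$ on a $\sigma$-finite measure space $(X,\mu)$ which is $C_0$ and admits an almost invariant sequence $(A_n)$ of sets of finite positive measure, set $\xi_n = \chi_{A_n}/\sqrt{\mu(A_n)} \in \LL^2(X,\mu)$ and define $\varphi_n(g) = \langle g \cdot \xi_n,\, \xi_n \rangle = \mu(gA_n \cap A_n)/\mu(A_n)$. Each $\varphi_n$ is a normalized positive definite function as a diagonal matrix coefficient of the associated Koopman representation. The identity
\[
\mu(A_n)\,\bigl(1 - \varphi_n(g)\bigr) = \tfrac{1}{2}\,\mu(gA_n \triangle A_n)
\]
combined with almost invariance yields uniform convergence $\varphi_n \to 1$ on compact subsets of $G$, and the $C_0$ condition on the Koopman representation forces $\varphi_n \in C_0(G)$. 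So $G$ has the Haagerup property.

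For the converse, the strategy is to produce, out of the Haagerup property, an explicit measure preserving action on an infinite measure space that is $C_0$ and has an almost invariant sequence. The natural starting point is a proper affine isometric action of $G$ on a real Hilbert space $H$, equivalently a proper conditionally negative definite function $\psi$ on $G$. From such data one should construct a model space and action: one can, for instance, use a Poisson-type construction on a suitable $\sigma$-finite base, or, when $G$ acts properly on a tree or a CAT(0) cube complex, work with a geometric construction on the set of vertices or on a space of flags. The requirement is that matrix coefficients of the Koopman representation lie in $C_0(G)$, reflecting the properness of the original cocycle, and that there exist sets of large but finite measure whose symmetric difference with their translates under compact subsets of $G$ is small relative to their own measure.

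The main obstacle is precisely the converse direction: standard Gaussian constructions, tied to Schoenberg's theorem, produce probability measure preserving actions and so do not directly supply the infinite measure setting we need; moreover, in the affine case the pushforwards of a Gaussian measure under different translations are mutually singular in infinite dimensions. A workable replacement must preserve both the $C_0$ decay of matrix coefficients and the existence of Følner-like sets, and I expect it will require a Poisson suspension of a carefully chosen $\sigma$-finite system (or a tailored geometric model for specific families of Haagerup groups), so that the almost invariant sets can be exhibited explicitly as preimages of bounded regions under an orbit or cocycle map associated with the proper action.
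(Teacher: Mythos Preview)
Your ``if'' direction is correct and matches the paper's approach: the Koopman representation on $\LL^2(X,\mu)$ is a $C_0$ unitary representation weakly containing the trivial one, which is exactly the characterization of the Haagerup property used in the paper (Definition~\ref{Haagerp Definition} and Proposition~\ref{DJ}).

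The converse, however, is not a proof but a discussion of what one might try, and it ends by conceding that you do not yet have a construction. That is a genuine gap. You correctly identify the starting point---a proper affine isometric action $\alpha$ on a real Hilbert space $H$, with linear part $\pi$ and proper cocycle $c$---and you correctly observe that the Gaussian functor by itself only yields a probability measure preserving action $\hat{\pi}:G\curvearrowright(\hat{H},\mu)$. But you then look for a replacement (Poisson suspensions, geometric models) rather than the actual fix: one keeps the Gaussian pmp action and forms a \emph{skew product} with $(\R,\lambda)$. The paper (following \cite{amine1}) sets $\Omega=\hat{H}\times\R$ with $\nu=\mu\otimes\lambda$ and defines
\[
\beta_g(\omega,t)=\bigl(\hat{\pi}(g)\omega,\; t+\langle\omega,c(g^{-1})\rangle\bigr),
\]
where $\langle\omega,\eta\rangle$ denotes the Gaussian random variable $\hat{\eta}(\omega)$. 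This is an infinite measure preserving action. Properness of $c$ forces the random variables $\widehat{c(g_n)}$ to have variance $\|c(g_n)\|^2\to\infty$, and a dominated convergence argument with a faithful vector $\xi=1\otimes h$ shows the Koopman representation is $C_0$. For almost invariant vectors one takes $\xi_n=1\otimes\frac{1}{\sqrt{2n}}\mathbf{1}_{[-n,n]}$: the $\R$-coordinate is shifted by a quantity that is bounded in probability uniformly over compact sets (by continuity of $c$), so $\langle\pi_\Omega(g)\xi_n,\xi_n\rangle\to 1$ uniformly on compacta. Your worry about mutual singularity of translated Gaussians is a red herring here: the skew product uses only the \emph{linear} part $\hat{\pi}$ on the Gaussian factor, with the cocycle pushed into the $\R$-coordinate, so no quasi-invariance issue arises.
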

In this article we provide a new characterization of the Haagerup property that is the non commutative version of the characterization in \cref{thm:Omega}.
We prove the following theorem:
\begin{theorem}\label{thm:main}
	Let $G$ be a locally compact second countable group and let $N$ be a von Neumann algebra. If there exists an action of $G$ on $N$ that is $C_0$ and has an almost invariant sequence, then $G$ has the Haagerup property.
	Conversely, if $G$ has the Haagerup property, then there exists von Neumann algebras $N$ of every type for which there exists an action of $G$ on $N$ that is $C_0$ and has an almost invariant sequence.
\end{theorem}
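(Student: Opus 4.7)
The plan for the forward direction is to extract from the given data a sequence of normalized positive definite functions in $C_0(G)$ converging to $1$ uniformly on compact sets, thereby verifying the classical definition of the Haagerup property recalled in the introduction. Given a $G$-action $\alpha$ on $N$ with a faithful normal $G$-invariant state $\psi$ and an almost invariant sequence $(x_n) \subset N$, I form the matrix coefficients $\varphi_n(g) = \psi\bigl(x_n^*\,\alpha_g(x_n)\bigr)$, after suitable normalization and centering. Positive definiteness is automatic from the GNS construction associated to $\psi$, the $\varphi_n$ being matrix coefficients of the unitary implementation of $\alpha$ on the GNS Hilbert space. The $C_0$-property of the action ensures $\varphi_n \in C_0(G)$, while almost invariance of $(x_n)$ in the $L^2$-norm induced by $\psi$ gives $\varphi_n(g) \to 1$ uniformly on compact sets.

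For the converse I want to produce such an action on a von Neumann algebra of each type. \cref{thm:Omega} supplies a measure preserving $G$-action on a measure space $(\Omega,\mu)$ that is $C_0$ and admits an almost invariant sequence; passing to $N = \LL^\infty(\Omega,\mu)$ yields a $C_0$-action with almost invariant sequence on an abelian von Neumann algebra, which covers type \rom{1}. To cover types \rom{2}$_1$, \rom{2}$_\infty$, and \rom{3}, I take the tensor product of $\LL^\infty(\Omega,\mu)$ with each of the Jolissaint factors $R$, $R\otimes \B(\ell^2)$, and $R_\lambda$ appearing in \cref{noncommutative jol}, equipped with the product of the reference states (or weights). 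Letting $G$ act trivially on the second factor, the almost invariant sequence $(x_n \otimes 1)$ remains almost invariant, and the relevant matrix coefficients reduce to those coming from the first tensor factor, so they stay in $C_0(G)$.

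The main obstacle is to pin down the correct noncommutative formulations of ``$C_0$-action'' and ``almost invariant sequence'' so that both directions go through cleanly. In the commutative infinite measure setting of \cref{thm:Omega} the $C_0$-property refers to matrix coefficients of the Koopman representation on $L^2(\Omega,\mu)$ and the almost invariant sequence consists of normalized indicators of sets of finite measure; these must be transferred to the GNS Hilbert space of a fixed faithful normal state (or semi-finite weight) on $N$. Handling the case where a finite $G$-invariant normal state on $N$ is unavailable, as happens already for type \rom{2}$_\infty$ or \rom{3}, likely requires working with a semi-finite weight and its associated standard form, and this is the main technical subtlety in making the construction uniform across types.
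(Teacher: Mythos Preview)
Your outline is correct and matches the paper's strategy: the forward direction is exactly \cref{DJ}, passing to the GNS (standard form) representation on $L^2(N,\varphi)$ to obtain a $C_0$ unitary representation weakly containing $1_G$; the converse is covered by \cref{cor:L_infty} together with the tensor-product construction of \cref{prop:tensor}. Two small points are worth noting.

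First, the paper sets up the whole forward implication with a faithful normal semifinite \emph{weight} from the outset (Definition~\ref{def:C0}), rather than a state; your last paragraph correctly identifies this as the technical point, but in fact it is not a subtlety to be deferred --- it is the actual framework, since already the commutative model $L^\infty(\Omega,\mu)$ from \cref{thm:Omega} carries no finite invariant state. Relatedly, your phrase ``after suitable normalization and centering'' is a residue of the strongly mixing/finite-state setting of \cref{noncommutative jol}: in the $C_0$ setting the matrix coefficients $g\mapsto\varphi(e_k^*\alpha_g(e_k))$ already lie in $C_0(G)$ with no subtraction of a mean, and the almost invariant projections are already $\varphi$-normalized.

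Second, for the converse the paper does more than tensor with a trivial action: it builds the type~\rom{1}$_\infty$ example directly on $\B(\mathcal H)$ via the adjoint action (\cref{operatoralgebra}), gives crossed-product examples (\cref{crossedproduct}, \cref{unimodular}), and in \cref{prop:tensor} tensors against the nontrivial Jolissaint actions on $R$ and $R_\lambda$ rather than the trivial ones. Your shortcut of acting trivially on the second tensor factor also works (the constant sequence $1_M$ is almost invariant for a state, and the tensor of a $C_0$ representation with any representation is $C_0$), so this is a genuine simplification for the bare existence statement; the paper's extra constructions buy concrete factor examples and the crossed-product machinery, but are not needed for \cref{thm:main} as stated.
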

In section \ref{sec1} we show how \cref{thm:Omega} translates to the language of von Neumann algebras. That is, we show that a group has the Haagerup property if and only if it has an action on $\LL^\infty(\Omega,\mu)$ that is $C_0$ and has almost invariant vectors.
We also show that the existence of such an action implies the Haagerup property for any von Neumann algebra $N$.

In section \ref{sec2} we show that such an action exists for every group with the Haagerup property and every von Neumann algebra listed below:
\begin{enumerate}
		\item $N = \LL^\infty(\Omega,\mu)$ (where $(\Omega,\mu)$ is the measure space in \cref{thm:Omega});
		\item $N = \B(\LL^2(\Omega,\mu))$ (see \cref{operatoralgebra});
		\item When $G$ admits a discrete subgroup $\Gamma$ satisfying the hypothesis of \cref{crossedproduct}, $$N=\LL^\infty(\Omega,\mu)\rtimes \Gamma;$$
		\item When $G$ is unimodular, $N = \B(\LL^2(\Omega,\mu)) \rtimes G$ (see \cref{unimodular});
		\item Take $R_\lambda$, the Powers factor, $N = \B(\LL^2(\Omega,\mu)) \otimes R_\lambda$ (see \cref{prop:tensor}).
\end{enumerate}
In the last section, we discuss some natural questions which remained open related to \cref{thm:Omega}. When $G$ is amenable, then $L^{\infty}(G)$ has an invariant mean and it suffices to consider the regular representation of $G$ to have an example of \cref{thm:Omega}. So one natural question, first asked by Y.Stalder, is the following:
\begin{question}
What can be an explicit example of such a dynamical system $(\Omega,\mu)$ as in \cref{thm:Omega} when $G$ is not amenable ? 
\end{question}
In section \ref{sec3}, for groups which acts properly on trees, we find such action using tools from \cite{amine1} and \cite{amine2}.
In the case of the free group on two generators, we have :
\begin{example}
Let $\sigma:\mathbb{F}_2\curvearrowright T_4$ its action on its Cayley graph. Then an explicit example of infinite measure space for \cref{thm:Omega} is given by:
$$(\Omega(T_4)\times \R, \mu \otimes \lambda),$$
where $\Omega(T_4)$ is the space of all possible orientations of the tree and the action of $\mathbb{F}_2$ is a skew product action defined in section \ref{sec3}.1.
\end{example}
The second topic which is relevant in this context is the ergodicity of the $C_0$-dynamical system. \\
Let us recall that a locally compact group is periodic if every cyclic subgroup is relatively compact. So a locally compact group is non-periodic if and only if it contains a discrete subgroup isomorphic to $\Z$.\\
We proved under a weak assumption on the group, i.e. $G$ is non-periodic, that the action can be assumed to be ergodic. It results from the following proposition which is an extension of theorem $9.1.3$ of \cite{amine1} to the locally compact case:
\begin{proposition}
Let $G$ be a l.c.s.c. group which is non-periodic. Let $\pi: G\to \mathcal{O}(\mathcal{H})$ be a representation of class $C_0$ and $c\in Z^1(\pi,\mathcal{H})$ be an evanescent $1$-cocycle that is not a coboundary. Then the skew product action $\beta$ defined in section \ref{sec3}.1 is weakly mixing.
\end{proposition}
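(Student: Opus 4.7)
My strategy is to reduce to the discrete case already handled in Theorem~9.1.3 of \cite{amine1}. The hypothesis that $G$ is non-periodic directly furnishes a discrete subgroup $\Gamma\le G$ isomorphic to $\Z$, say $\Gamma=\langle g_0\rangle$. The plan is to restrict the whole data $(\pi,c,\beta)$ to $\Gamma$, apply the known $\Z$-version, and then transfer weak mixing back to $G$.

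Before invoking Theorem~9.1.3, I would verify that the hypotheses survive restriction to $\Gamma$. Since $\Gamma$ is a discrete subgroup of $G$, escape to infinity inside $\Gamma$ forces escape to infinity in $G$; consequently every matrix coefficient of $\pi|_\Gamma$ still vanishes at infinity, and $c|_\Gamma$ remains evanescent. The point requiring care is that $c|_\Gamma$ could, a priori, become a coboundary on $\Gamma$ even though $c$ is not one on $G$; I would rule this out by observing that a $1$-cocycle on $\Z$ is a coboundary precisely when the sequence $\{c(g_0^n)\}_{n\in\Z}$ is bounded in $\mathcal H$, and then using the evanescence of $c|_\Gamma$ to forbid that boundedness.

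With the hypotheses verified, Theorem~9.1.3 of \cite{amine1} gives that $\beta|_\Gamma$ is weakly mixing as a $\Z$-action on $(\Omega\times\R,\mu\otimes\lambda)$. The passage from $\Gamma$ to $G$ is then essentially soft: I would use the spectral characterization of weak mixing as the absence of non-trivial finite-dimensional invariant subrepresentations in the Koopman representation on $\LL^2(\Omega\times\R,\mu\otimes\lambda)$. Any such subspace for $G$ would be in particular a finite-dimensional $\Gamma$-invariant subspace, contradicting the weak mixing of $\beta|_\Gamma$. Hence $\beta$ is weakly mixing.

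The main obstacle I foresee is handling weak mixing carefully in the infinite-measure setting: the equivalence between the spectral definition and formulations via product actions or density-$1$ decay of correlations that is familiar for probability-preserving actions deserves a separate argument on $(\Omega\times\R,\mu\otimes\lambda)$. A secondary and more technical point is to extract from the precise notion of \emph{evanescent} cocycle used in section~\ref{sec3} that $c|_\Gamma$ has an unbounded orbit, so as to complete the non-coboundary step above.
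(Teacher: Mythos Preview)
Your reduction has a genuine gap at the step ``$c|_\Gamma$ is not a coboundary.'' The claim that evanescence of $c|_\Gamma$ forbids boundedness of $\{c(g_0^n)\}$ is false: the zero cocycle is evanescent and bounded. What you actually need is that $c$ not being a coboundary on $G$ forces $c|_\Gamma$ not to be a coboundary on the particular cyclic $\Gamma$ you chose, and this simply need not hold. For a single arbitrary $g_0$ there is no mechanism preventing $c(g_0)$ from lying in the range of $I-\pi(g_0)$ (so that $n\mapsto c(g_0^n)$ is bounded) while $c$ remains unbounded elsewhere on $G$. Nothing in the hypotheses lets you \emph{select} a $g_0$ with $\|c(g_0^n)\|\to\infty$; non-periodicity only hands you some discrete copy of $\Z$, not one adapted to $c$.

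This is exactly why the paper does \emph{not} restrict the whole problem to $\Gamma$. It uses the infinite cyclic subgroup $\Lambda$ only to obtain \emph{recurrence} of the skew product (via conservativity, Theorem~9.1.2 of \cite{amine1}), which requires no hypothesis on $c|_\Lambda$ beyond what is inherited automatically. Recurrence of $\Lambda$ together with the mixing of $\Lambda$ on the Gaussian factor $\hat K$ feeds into Theorem~7.13 of \cite{amine2} to strip off that factor. The evanescence and non-coboundary assumptions are then exploited at the level of the full group $G$, through the nested affine subspaces $\xi_n+H_n$ and Proposition~2.7 of \cite{amine2}, to kill the remaining invariants. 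In short, the paper decouples the role of $\Z$ (recurrence) from the role of the cocycle hypotheses (applied to all of $G$), whereas your approach ties them together and thereby inherits an obstruction. As a side remark, your worry about the spectral characterisation in infinite measure is well-founded, but the transfer of weak mixing from $\Gamma$ to $G$ can be done directly with the product-action definition: if $\beta|_\Gamma$ is weakly mixing then for any pmp $G$-action $\rho$ the $\Gamma$-invariants of $\beta\otimes\rho$ lie in $1\otimes L^\infty(Y)$, hence so do the $G$-invariants. That step is fine; the non-coboundary step is the real obstacle.
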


\subparagraph*{Acknowledgments}
The second author expresses his deep gratitude to Amine Marrakchi for pointing him many tools from \cite{amine1} and for his precious help in general concerning section $4$. We are also grateful to Alain Valette and Paul Jolissaint for their numerous relevant comments.

\section{From actions on von Neumann algebras to the Haagerup property}\label{sec1}
Throughout this section, $G$ denotes a non-compact locally compact, second countable group. In this article we will use two characterizations of the Haagerup property. For more informations on the Haagerup property we refer to \cite{cherix2012groups}. The first characterization comes from \cite{GromovATMenability} section $4.5.C$:
\begin{definition}\label{Haagerp Definition}
$G$ has the Haagerup property if and only if there exists a $C_0$ unitary representation $(\pi, \mathcal{H})$ of $G$, that is, all the matrix coefficients $\varphi_{\xi,\eta} : g\mapsto \langle \pi(g)\xi, \eta\rangle$ belong to $C_0(G)$, which weakly contains the trivial representation $1_G$.
\end{definition} 
The second characterization was introduced in \cite{delabie2019new}, which contains a dynamical characterization of the Haagerup property in terms of actions on infinite measure spaces. 
\begin{theorem}\label{THM DJZ}
	Let $G$ be a locally compact, second countable group. Then it has the Haagerup property if and only if it admits an action by Borel automorphisms on a measure space $(\Omega,\mu)$ such that $\mu$ is $\sigma$-finite, $G$-invariant and satisfies the following two additional properties:
	\begin{enumerate}
		\item for all measurable subsets $A,B\subset \Omega$ such that $0<\mu(A),\mu(B)<\infty$, the function $g\mapsto \mu(gA\cap B)$ is $C_0$, i.e. $(\Omega,\mathcal{B}, \mu,G)$ is a $C_0$-dynamical system;
		\item There exists a sequence of subsets $(A_n)_{n\ge 1}\subset \mathcal{B}$ with measure $\mu(A_n)=1$ for all $n$ such that for all compact subsets $K$ of $G$ we have 
\begin{equation}\label{Property3}
\lim_{n\rightarrow \infty} \sup_{g\in K} \mu(gA_n\Delta A_n)=0.
\end{equation}
	\end{enumerate}
\end{theorem}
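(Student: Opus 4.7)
The statement is an equivalence between the Haagerup property and the existence of a specific kind of $C_0$-dynamical system on an infinite measure space. The ``if'' direction is a standard Koopman construction, while the ``only if'' direction is the substantive one and requires building a measure-theoretic model from the Haagerup data.

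For the ``if'' direction, assume the action $G \curvearrowright (\Omega, \mu)$ satisfies (1) and (2). Define the Koopman unitary representation $\pi: G \to U(\LL^2(\Omega, \mu))$ by $\pi(g)f = f \circ g^{-1}$; this is unitary because $\mu$ is $G$-invariant. For finite-measure $A, B \subset \Omega$, the matrix coefficient is $\langle \pi(g) \mathbf{1}_A, \mathbf{1}_B \rangle = \mu(gA \cap B)$, which lies in $C_0(G)$ by (1). Since $\mu$ is $\sigma$-finite, simple functions supported on finite-measure sets are dense in $\LL^2(\Omega,\mu)$, so a standard $3\epsilon$-argument shows every matrix coefficient of $\pi$ lies in $C_0(G)$, i.e.\ $\pi$ is a $C_0$ representation. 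The vectors $\xi_n = \mathbf{1}_{A_n}$ are unit vectors (since $\mu(A_n) = 1$) and satisfy $\|\pi(g)\xi_n - \xi_n\|^2 = \mu(gA_n \Delta A_n)$, which tends to $0$ uniformly on compact subsets of $G$ by (2). Hence $\pi$ weakly contains $1_G$ and is $C_0$, so $G$ has the Haagerup property by \cref{Haagerp Definition}.

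For the ``only if'' direction, assume $G$ has the Haagerup property. By Delorme--Guichardet this is equivalent to the existence of an orthogonal representation $(\pi, \mathcal{H})$ together with a proper $1$-cocycle $b \in Z^1(\pi, \mathcal{H})$; the function $\psi(g) = \|b(g)\|^2$ is then a proper conditionally negative definite function. The plan is to realize $(\pi, b)$ as a concrete geometric skew product producing the desired $(\Omega, \mu)$. Guided by the explicit $\Omega(T_4) \times \R$ example for $\mathbb{F}_2$ described in Section \ref{sec3}, one builds $\Omega$ as a probability-measure factor $(X, \nu_X)$ carrying the representation $\pi$ (e.g.\ the Gaussian probability space or a Bernoulli-type model associated to $\pi$) twisted with an infinite Lebesgue factor $\R$ via a suitable $\R$-valued cocycle $c \in Z^1(G \curvearrowright X, \R)$ obtained from $b$. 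The invariant measure $\mu = \nu_X \otimes \mathrm{Leb}$ is $\sigma$-finite, the skew-product action preserves it, the $C_0$ property follows from the $C_0$ decay of the Koopman matrix coefficients on $X$ (controlled by $e^{-c\psi(g)}$ via Schoenberg), and almost invariant sets are taken to be product sets $B_n \times I_n$ with $\nu_X(B_n)\cdot\mathrm{Leb}(I_n) = 1$.

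The main obstacle is the quantitative balancing in choosing the almost invariant sets: to get $\mu(gA_n \Delta A_n) \to 0$ uniformly on compact $K \subset G$, one must trade off the ``Bernoulli'' part (where $(B_n)$ should behave like a F\o{}lner sequence for the action on $X$) against the Lebesgue part (where $I_n$ should absorb the cocycle shifts $c(g, \cdot)$ for $g \in K$). This is delicate for non-amenable Haagerup groups such as $\mathbb{F}_2$, where no F\o{}lner sequence exists on $X$ in the classical sense, so the $\R$-factor must do the bulk of the work of absorbing the symmetric differences; making this precise is exactly where the geometric specifics of the Haagerup cocycle $b$ (coming from an action on a tree or more generally on a CAT(0) space, as illustrated in Section \ref{sec3}) become essential.
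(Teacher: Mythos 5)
Your ``if'' direction is correct and is the standard Koopman argument the paper itself relies on (compare \cref{DJ} and \cref{cor:L_infty}). Note that the paper does not reprove \cref{THM DJZ} in Section 2: it imports it from \cite{delabie2019new}, only remarking that the almost invariant vectors there can be taken to be indicator functions. The self-contained construction closest to your ``only if'' sketch is the alternative skew-product proof given in Section \ref{sec3} (leading to \cref{THM DJZ 2}), so your overall architecture --- Gaussian space for the linear part of a proper affine action, an $\R$-fiber twisted by the cocycle, $\mu=\nu_X\otimes\mathrm{Leb}$ --- is the right one.

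However, your ``only if'' direction is not actually completed, and the step you single out as the ``main obstacle'' rests on a misdiagnosis. No F\o{}lner-type sequence $(B_n)$ on the Gaussian factor is needed: that factor is a \emph{probability} space on which $G$ acts by measure-preserving automorphisms, so one takes $B_n=\hat{\mathcal{H}}$ (the whole space) and $A_n=\hat{\mathcal{H}}\times I_n$ with $I_n=[-n,n]$. The entire symmetric difference then comes from translating the second coordinate by $\langle\omega,c(g^{-1})\rangle$, a centered Gaussian of variance $\|c(g^{-1})\|^2$; with $\xi_n=|I_n|^{-1/2}\mathbf{1}_{A_n}$ one gets
\[
\|\pi_\Omega(g)\xi_n-\xi_n\|^2\;\le\;\frac{2}{|I_n|}\,\E\bigl|\langle\,\cdot\,,c(g^{-1})\rangle\bigr|\;=\;O\!\left(\frac{\|c(g)\|}{|I_n|}\right),
\]
which tends to $0$ uniformly on a compact $K$ simply because $g\mapsto\|c(g)\|$ is continuous, hence bounded on $K$. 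No tree or CAT(0) geometry of the cocycle enters here; this is exactly the computation with $\xi_n=1\otimes\frac{1}{\sqrt{2n}}\mathbf{1}_{[-n,n]}$ in Section \ref{sec3}. A second inaccuracy: the $C_0$ property of the skew product does not follow from Schoenberg-type decay on the Gaussian factor alone --- on the subspace $1\otimes \LL^2(\R)$ that factor contributes nothing --- but from the \emph{properness} of the cocycle, which pushes the $\R$-shift to infinity; the paper verifies this with the faithful positive vector $1\otimes h$ and \cref{Koopman}, and explicitly remarks that the $C_0$ property depends only on the cocycle, not on the mixing of $\hat{\pi}$. So as written your proof has a genuine gap: the almost invariant sets are never constructed, and the route you propose for constructing them (geometric specifics of $b$, F\o{}lner behaviour on $X$) is not the one that works.
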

Note that for the second condition one has the equality $\mu(gA_n\Delta A_n) = \|\pi_{\Omega}(g)\chi_{A_n}-\chi_{A_n}\|^2$. Hence, the second condition is equivalent to the existence of such sets $A_n$ with
\[\lim_{n\rightarrow \infty} \sup_{g\in K} \langle\pi_{\Omega}(g)\chi_{A_n},\chi_{A_n}\rangle=1.\]
Now, \cref{THM DJZ} is equivalent to Theorem 1.5 of \cite{delabie2019new}, as the vectors $\xi_n$ in the proof are indicator functions, see \cite{delabie2019new} on bottom of page 11.
\\
Theorem \ref{THM DJZ} can be expressed in terms of actions on the commutative von Neumann algebra $L^{\infty}(\Omega)$. Moreover, it turns out that this theorem has a noncommutative analogue. This section is devoted to state it and prove how the noncommutative analogue implies the Haagerup property. In order to translate theorem \ref{THM DJZ} in terms of von Neumann algebras, we need some preliminaries.\\
\linebreak
Let $N$ be a von Neumann algebra with separable predual, and let $\varphi$ be a \emph{faithful normal semifinite weight} on $N$. A weight on $N$ is the linear extension of a map $\varphi: N^+\to [0,+\infty]$ satisfying the following conditions:
$$\varphi(x+y)=\varphi(x)+\varphi(y), \quad x,y\in N^+; \quad \varphi(\lambda x)=\lambda\varphi(x) \; \lambda \ge 0.$$
It is said to be semifinite if $$\mathfrak{p}_{\varphi}=\{x\in N^+:\varphi(x)<\infty\}$$ generates $N$ and faithful if $\varphi(x)\neq 0$ for every non-zero $x\in N^+$. For more details we refer to \cite{takesaki2013theory}. Let
$$\mathfrak{n}_{\varphi}=\left\{x\in N: \; \varphi(x^*x)<\infty\right\},$$ 
and we denote $L^2(N,\varphi)$ the completion of $\mathfrak{n}_{\varphi}$ with respect to the norm 
\[ \|x\|_{\varphi}^2=\varphi(x^*x) \quad x\in \mathfrak{n}_{\varphi}.\]
Thus by extending the left multiplication on $\mathfrak{n}_{\varphi}$ we obtain an action of $N$ on the Hilbert space $L^2(N,\varphi)$. We will suppose that $L^2(N,\varphi)$ is separable. Moreover, we want an action $\alpha\colon G \to  \text{Aut}(N)$ such that its extension to $\LL^2(N,\varphi)$ is a unitary representation. This prerequisite is satisfied with a few simple assumptions:
\begin{proposition}\label{representation}
	Let $G$ be an l.c.s.c.\ group with an action $\alpha$ on a von Neumann algebra $N$.
	The action $\alpha$ extends to a unitary representation on $\LL^2(N,\varphi)$ if and only if $\alpha$ preserves the weight $\varphi$, that is $\varphi\circ \alpha_g=\varphi$, and for every $x\in \mathfrak{n}_{\varphi}$ we have that the maps $g \mapsto \varphi(x^* \alpha_g(x))$ are measurable.
\end{proposition}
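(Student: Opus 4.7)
The plan is to prove the two implications separately, reducing each to standard facts about modular theory and measurable group representations.

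For the forward direction, I would start by assuming $\alpha$ extends to a unitary representation $\pi$ on $\LL^2(N,\varphi)$. Then for any $x\in \mathfrak{n}_\varphi$, interpreting $x$ as a vector in $\LL^2(N,\varphi)$ gives
\[
\varphi(\alpha_g(x)^*\alpha_g(x)) = \|\pi(g)x\|_\varphi^2 = \|x\|_\varphi^2 = \varphi(x^*x).
\]
Every element of $N^+$ with finite weight is of the form $x^*x$ for some $x\in\mathfrak{n}_\varphi$ (take $x=y^{1/2}$), so this establishes $\varphi\circ\alpha_g=\varphi$ on the positive cone with finite weight. The case $\varphi(y)=\infty$ is handled by symmetry: if $\varphi(\alpha_g(y))<\infty$, then applying the same argument to $g^{-1}$ forces $\varphi(y)<\infty$, a contradiction. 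For the measurability, $\varphi(x^*\alpha_g(x))=\langle\pi(g)x,x\rangle_\varphi$ is a diagonal matrix coefficient of a strongly continuous unitary representation, so it is continuous in $g$, hence measurable.

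For the converse, I would first use the invariance $\varphi\circ\alpha_g=\varphi$ to observe that $\alpha_g$ preserves $\mathfrak{n}_\varphi$ and acts on it as an isometry for $\|\cdot\|_\varphi$. Since $\mathfrak{n}_\varphi$ is dense in $\LL^2(N,\varphi)$, each $\alpha_g$ extends to an isometry $\pi(g)$ of $\LL^2(N,\varphi)$. The cocycle relation $\pi(gh)=\pi(g)\pi(h)$ follows from $\alpha_{gh}=\alpha_g\alpha_h$ on the dense subspace, and surjectivity (hence unitarity) follows from $\pi(g)\pi(g^{-1})=\pi(e)=\Id$. So $\pi$ is a group homomorphism into the unitary group of $\LL^2(N,\varphi)$.

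The main point is then to promote the scalar measurability hypothesis to continuity of $\pi$. By a polarization identity, for $x,y\in\mathfrak{n}_\varphi$ the function
\[
g\mapsto \varphi(y^*\alpha_g(x))=\langle\pi(g)x,y\rangle_\varphi
\]
is a finite linear combination of the given measurable functions of the form $g\mapsto\varphi(z^*\alpha_g(z))$ with $z\in\{x,y,x\pm y,x\pm iy\}\subset\mathfrak{n}_\varphi$, and is therefore itself measurable. Since $\mathfrak{n}_\varphi$ is dense in $\LL^2(N,\varphi)$ and $\|\pi(g)\|=1$, a uniform-approximation argument extends weak measurability to all matrix coefficients. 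I would then invoke the classical theorem that every weakly Borel measurable unitary representation of an l.c.s.c.\ group is automatically strongly continuous (see, e.g., Folland's \emph{A Course in Abstract Harmonic Analysis}) to conclude that $\pi$ is a continuous unitary representation.

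The main obstacle is the last step, the passage from weak measurability to strong continuity; however this is precisely the standard fact about unitary representations of l.c.s.c.\ groups, so the proof amounts to carefully verifying that the hypotheses of that theorem are met, which is exactly what the polarization and density arguments accomplish.
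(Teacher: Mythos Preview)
Your proposal is correct and follows essentially the same approach as the paper: both directions hinge on the identity $\varphi(x^*\alpha_g(x))=\langle\pi(g)x,x\rangle_\varphi$, with the converse built by extending the $\|\cdot\|_\varphi$-isometries from $\mathfrak{n}_\varphi$ and then upgrading measurability of matrix coefficients to strong continuity via the standard automatic-continuity theorem for l.c.s.c.\ groups. The only cosmetic difference is that the paper works with diagonal coefficients and invokes separability of $\LL^2(N,\varphi)$ (Lemma~A.6.2 of \cite{bekka2008kazhdan}), while you polarize to get all coefficients before citing the same theorem from a different source.
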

\begin{proof}
	It is clear that the given action extends to a unitary representation on $L^2(N,\varphi)$. This follows from the fact that $G$ acts by isometries on $\mathfrak{n}_{\varphi}$ and then it suffices to consider their extension. In other words, to obtain an unitary representation it suffices to set for every $g\in G$ $$\pi(g)(x)=\lim_{n\rightarrow \infty} \alpha_g(x_n) \quad x\in L^2(N,\varphi),$$
	where $(x_n)_{n\ge 1} \subset \mathfrak{n}_{\varphi}$ is any sequence such that $\|x_n-x\|_{\varphi}\xrightarrow{n\to \infty} 0.$   The strong continuity of $\pi$ follows from Lemma $A.6.2$ on page $309$ of \cite{bekka2008kazhdan}. Indeed we assumed that $L^2(N,\varphi)$ is separable and the following sequence of functions defined by 
	\begin{equation*}
	f_n\colon G \to \mathbb{C} \colon g  \mapsto \langle \pi(g)x_n, x_n\rangle_{\varphi}=\varphi(x_n^*\alpha_g(x_n))
	\end{equation*}
	where $(x_n)_{n\ge 1}\subset \mathfrak{n}_{\varphi}$, is a sequence of measurable functions. Therefore for every $\xi\in L^2(N,\varphi)$ fixed, we have that the map $g\mapsto \langle\pi(g)\xi, \xi\rangle_{\varphi}$ is measurable as it is the pointwise limit of the sequence $(f_n)_n$.
	
	Next, we show the converse. Note that $\varphi(x^* \alpha_g(x)) = \langle \pi_g(x),x \rangle$ for every $x\in \mathfrak{n}_{\varphi}$. As $\pi$ is strongly continuous, we have that the maps $g \mapsto \varphi(x^* \alpha_g(x))$ are continuous and hence measurable. So, it suffices to show that $\alpha$ preserves the weight $\varphi$. Suppose it is not preserved. As it is the linear extension of a map on the positive elements of $N$, we can take $x\in N$ and $g\in G$ such that $\varphi(x^*x)\neq \varphi\circ\alpha_g(x^*x)$. Without loss of generality, we may assume that $\varphi(x^*x)$ is finite. However, that implies that $\|x\|_{\varphi}^2\neq \varphi\circ\alpha_g(x^*x) = \|\alpha_g(x)\|_{\varphi}^2$, which contradicts the fact that $\pi$ is unitary.
\end{proof}

\begin{definition}\label{def:C0}
	Let $G$ be an l.c.s.c. group with an action $\alpha$ on a von Neumann algebra $N$.
	The action $\alpha$ is $C_0$ for $\varphi$ if its extension to $\LL^2(N,\varphi)$ is a unitary representation and 
	\begin{equation}\label{property1}
	\lim_{g\rightarrow \infty} \varphi(\alpha_g(x)y)=0 \quad \forall x,y \in \mathfrak{n}_{\varphi}.
	\end{equation}
	A sequence of projections $(e_k)_{k \ge 1}\subset \mathfrak{n}_{\varphi}$ is said to be an almost invariant sequence for $\alpha$ and $\varphi$ if $\|e_k\|_{\varphi}=1$ for all $k$ and 
	\begin{equation}\label{property2}
	\lim_{k\rightarrow \infty} \sup_{g\in K} \| \alpha_g(e_k)-e_k\|_{\varphi}=0,
	\end{equation}
	for all compact subsets $K$ of $G$.
\end{definition}
Next we show that if $\alpha$ is $C_0$ and $\mathfrak{n}_{\varphi}$ contains an almost invariant sequence, then $G$ has the Haagerup property.

\begin{proposition}\label{DJ}
	Assume that there exists an action $\alpha : G \to \text{Aut}(N)$ which is $C_0$ for $\varphi$ and that $\mathfrak{n}_{\varphi}$ contains an almost invariant sequence for $\alpha$ and $\varphi$. Then the representation defined in \cref{representation} weakly contains the trivial representation of $G$ and is $C_0$. In particular $G$ has the Haagerup property.
\end{proposition}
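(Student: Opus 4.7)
The plan is to verify the two properties of the unitary representation $\pi$ on $L^2(N,\varphi)$ separately, and then appeal directly to \cref{Haagerp Definition}. For the almost invariance, the projections $e_k$ are already vectors in $L^2(N,\varphi)$ with $\|e_k\|_\varphi=1$, and the condition in \eqref{property2} is literally the statement that $\pi(g)e_k\to e_k$ in norm, uniformly on compact subsets $K\subset G$. This is precisely what it means for the unit vectors $e_k$ to be almost invariant for $\pi$, so $\pi$ weakly contains the trivial representation $1_G$.

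For the $C_0$ part, the goal is to show that every matrix coefficient $g\mapsto \langle \pi(g)\xi,\eta\rangle_\varphi$ lies in $C_0(G)$. First I would reduce to the dense subspace $\mathfrak{n}_\varphi\subset L^2(N,\varphi)$: if $x_n\to \xi$ and $y_n\to \eta$ in $L^2(N,\varphi)$ with $x_n,y_n\in\mathfrak{n}_\varphi$, then since $\pi$ is unitary the estimate
\[
\bigl|\langle \pi(g)\xi,\eta\rangle_\varphi - \langle \pi(g)x_n,y_n\rangle_\varphi\bigr|\le \|\xi-x_n\|_\varphi\,\|\eta\|_\varphi+\|x_n\|_\varphi\,\|\eta-y_n\|_\varphi
\]
is uniform in $g$, so the matrix coefficient of $\xi,\eta$ is a uniform limit of the matrix coefficients of $x_n,y_n$. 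Since $C_0(G)$ is closed under uniform convergence, it is enough to prove the $C_0$ property for matrix coefficients coming from $\mathfrak{n}_\varphi$.

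For $x,y\in \mathfrak{n}_\varphi$ one has $\langle \pi(g)x,y\rangle_\varphi=\varphi(y^*\alpha_g(x))$, and this is exactly (up to rewriting the element $y^*$) of the form appearing in \eqref{property1}, so the $C_0$ hypothesis on $\alpha$ gives $\langle\pi(g)x,y\rangle_\varphi\to 0$ as $g\to\infty$. Measurability of $g\mapsto \langle\pi(g)x,y\rangle_\varphi$ was established in the proof of \cref{representation}, and together with strong continuity it is automatically continuous, hence a genuine element of $C_0(G)$. Combining the two properties, $\pi$ is a $C_0$ unitary representation weakly containing $1_G$, so \cref{Haagerp Definition} applies and $G$ has the Haagerup property.

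The main point requiring care is the bookkeeping between the $C_0$ condition of \cref{def:C0}, which is formulated for products $\alpha_g(x)y$ with $x,y\in\mathfrak{n}_\varphi$, and the matrix coefficient $\varphi(y^*\alpha_g(x))$ which is what the inner product actually produces. Since $\mathfrak{n}_\varphi$ is a left ideal and $\alpha_g$ preserves both $\mathfrak{n}_\varphi$ and $\varphi$, both expressions make sense whenever $x,y\in\mathfrak{n}_\varphi$, so this is a matter of notation rather than a real obstruction; everything else is a standard density and approximation argument.
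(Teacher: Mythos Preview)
Your proof is correct and follows essentially the same approach as the paper's: identify the matrix coefficients on the dense subspace $\mathfrak{n}_\varphi$ with the quantities in the $C_0$ hypothesis, and read off weak containment of $1_G$ directly from the almost invariant sequence. You are simply more explicit than the paper about the uniform approximation step extending the $C_0$ property from $\mathfrak{n}_\varphi$ to all of $L^2(N,\varphi)$, and about the $y$ versus $y^*$ bookkeeping in \eqref{property1}.
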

\begin{proof}
	The set $\mathfrak{n}_{\varphi}$ is total in $L^2(N,\varphi)$ by construction, and for all $x,y\in \mathfrak{n}_{\varphi}$
	$$\langle \pi(g) x , y\rangle_{\varphi}=\varphi(y^*\alpha_g(x))\rightarrow 0,$$
	as $g\rightarrow \infty$. This proves that $\pi$ is of class $C_0$. Moreover, $1_G\prec \pi$ by the existence of an almost invariant sequence.
\end{proof}
	We can now translate theorem \ref{THM DJZ} in terms of actions on von Neumann algebras.
\begin{corollary}\label{cor:L_infty}
	$G$ has the Haagerup property if and only if there exists a commutative algebra $L^{\infty}(\Omega,\mu)$ with its canonical semifinite, but infinite, trace $\tau$ and an action $\alpha$ of $G$ on this von Neumann algebra which is $C_0$ for $\tau$ and contains an almost invariant sequence for $\alpha$ and $\tau$.
\end{corollary}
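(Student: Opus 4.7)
The plan is to realize this as a direct translation dictionary between the measure-theoretic and operator-algebraic sides, leaning on the two previously established pieces: Theorem \ref{THM DJZ} for the construction of the measure space and Proposition \ref{DJ} for the conclusion that Haagerup follows from the existence of such an $\alpha$.

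For the ``if'' direction, there is nothing new to do: the hypotheses on the triple $(N,\tau,\alpha)$ are exactly the assumptions of Proposition \ref{DJ}, which then yields the Haagerup property directly.

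For the ``only if'' direction, I would start with the measure space $(\Omega,\mu)$ supplied by Theorem \ref{THM DJZ} and set $N = \LL^\infty(\Omega,\mu)$, equipped with the canonical trace $\tau(f) = \int_\Omega f \, d\mu$ for $f \in N^+$. Since $\mu$ is $\sigma$-finite, $\tau$ is a faithful, normal, semifinite trace; and since $\mu$ is $G$-invariant, the action $\alpha_g(f) = f \circ g^{-1}$ preserves $\tau$, so by Proposition \ref{representation} it extends to a unitary representation on $\LL^2(N,\tau) \cong \LL^2(\Omega,\mu)$ — in fact the Koopman representation $\pi_\Omega$ itself. Next, setting $e_k = \chi_{A_k}$ for the sets $A_k$ produced by condition (2) of Theorem \ref{THM DJZ} yields projections in $\mathfrak{n}_\tau$ with $\|e_k\|_\tau^2 = \mu(A_k) = 1$ and
\[
\|\alpha_g(e_k) - e_k\|_\tau^2 = \mu(g A_k \Delta A_k),
\]
so equation \eqref{Property3} is exactly \eqref{property2}. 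For the $C_0$ condition \eqref{property1}, on indicators one has $\tau(\alpha_g(\chi_A)\chi_B) = \mu(g A \cap B)$, which tends to $0$ by condition (1); by linearity this passes to simple functions of finite-measure support, and then to arbitrary $x,y \in \mathfrak{n}_\tau$ via the uniform estimate $|\tau(\alpha_g(x)y)| \le \|x\|_\tau\|y\|_\tau$ (Cauchy--Schwarz, as $\alpha$ acts isometrically on $\LL^2$) combined with the $\|\cdot\|_\tau$-density of such simple functions in $\mathfrak{n}_\tau$.

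There is no real obstacle; the only subtlety worth checking is that $\tau$ is genuinely infinite, as asserted in the statement. But if $\mu(\Omega) < \infty$, then since $\mu$ is $G$-invariant the sets $A = B = \Omega$ would force $\mu(\Omega) = \lim_{g\to\infty} \mu(g\Omega \cap \Omega) = 0$ by condition (1), a contradiction. Hence the trace produced is automatically semifinite but infinite, as required.
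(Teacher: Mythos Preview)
Your proof is correct and follows essentially the same approach as the paper's: both directions hinge on Proposition~\ref{DJ} and Theorem~\ref{THM DJZ} respectively, with the same identifications $N=L^\infty(\Omega,\mu)$, $\tau=\int_\Omega\,d\mu$, $\alpha_g(f)=f\circ g^{-1}$, and $e_k=\chi_{A_k}$. Your write-up is in fact slightly more thorough than the paper's, since you spell out the density argument passing the $C_0$ condition from indicators to all of $\mathfrak{n}_\tau$ and you verify that $\tau$ must be infinite; the only point you leave implicit is the measurability hypothesis of Proposition~\ref{representation}, but this follows at once from condition~(1) of Theorem~\ref{THM DJZ} since $C_0(G)$-functions are continuous.
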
 
\begin{proof}
If $G$ has such an action on $L^{\infty}(\Omega)$, then $G$ has the Haagerup property by Theorem \ref{DJ}. Now, if $G$ has the Haagerup property, then by theorem \ref{THM DJZ} there exists a $C_0$ dynamical system $(\Omega,\mathcal{B},\mu,G)$ which almost has invariant vectors. 
Now take the commutative von Neumann algebra $N=L^{\infty}(\Omega)$ endowed with the canonical trace given by $\tau=\int_{\Omega} d\mu$. Then $L^2(N,\tau)=L^2(\Omega,\mu)$ is separable as $\mu$ is $\sigma$-finite. The measurable action of $G$ on $N$ is given by the permutation representation $\pi_{\Omega}$ which is invariant for $\tau$ as $\mu$ is $G$-invariant. Furthermore the condition $(1)$ in the theorem \ref{THM DJZ} implies that the function  $g\mapsto \tau(x^*\alpha_g(x))$ is measurable for every $x\in \mathfrak{n}_{\tau}$ fixed, as well as the action is $C_0$ for $\tau$. Finally from the family $(A_n)$ we consider the associated sequence of unit vectors $\xi_n=\chi_{A_n}$ in $N$. As it satisfies the property (\ref{Property3}), then this sequence is an almost invariant sequence for $\alpha$ and $\tau$.
\end{proof}
In the next section we construct such actions on noncommutative von Neumann algebras.

\section{Constructing actions on von Neumann Algebras}\label{sec2}
In this section $G$ will denote a locally compact, second countable group with the Haagerup property. This section consists in three parts. First, we begin by creating an action on the $\rom{1}_{\infty}$ factor $\B(\mathcal{H})$ endowed with the canonical trace $\Tr$ and we prove this action is $C_0$ for $\Tr$ and there is an almost invariant sequence. 
Next, we give some actions on crossed products both on $N\rtimes_{\alpha} G$ and on von Neumann algebras of the form $N\rtimes_\alpha \Gamma$, with $\Gamma$ a discrete group. Finally we investigate some actions on tensor products.
\subsection{Action on the bounded operators of a Hilbert space}
Let $G$ be a locally compact, second countable group with the Haagerup property. Then using definition \ref{Haagerp Definition} there exists a $C_0$ representation $(\pi,\mathcal{H})$ of $G$ which weakly contains the trivial representation. Let us recall that we can assume without lost of generality that $\mathcal{H}$ is separable. Note that we can use $\mathcal{H}=L^2(\Omega)$ as in Theorem \ref{THM DJZ}, with the permutation representation.\\
Thus consider the $\rom{1}_{\infty}$ factor $N=\B(\mathcal{H})$ endowed with the canonical trace
$$\Tr(T)=\sum_n \langle Te_n,e_n\rangle,$$
where $(e_n)_n$ is a Hilbert basis of $\mathcal{H}$, and $T\in \B(\mathcal{H})$. Let $\alpha$ be the action of $G$ on $N$ given by 
$$\alpha_g(T)=\pi(g)T\pi(g)^*.$$
Then we have that $\Tr$ is $\alpha$-invariant
\begin{align*}
\Tr(\alpha_g(T))&=\sum_n\langle \pi(g)T\pi(g)^* e_n, e_n\rangle\\
&=\sum_n\langle T\pi(g^{-1}) e_n, \pi(g^{-1})e_n\rangle.
\end{align*}
The trace being invariant by change of basis, therefore we have $\Tr(\alpha_g(T))=\Tr(T).$\\
The following result can be deduced from \cite{bekka2008kazhdan} page 294.
\begin{proposition}\label{IsomorphismHilbert}
	Let $\mathcal{H}$ be a separable Hilbert space. Then the following map $\Psi$ is an isomorphism of Hilbert spaces from $\mathcal{H} \otimes \overline{\mathcal{H}}$ to $\LL^2(\B(\mathcal{H}),\Tr) $ defined by 
	\[\Psi(\xi\otimes \xi')(\eta) = \langle \eta, \xi \rangle\xi'.
	\]
\end{proposition}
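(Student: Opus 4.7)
The plan is to exploit the standard identification between Hilbert--Schmidt operators on $\mathcal{H}$ and a Hilbert tensor product of $\mathcal{H}$ with its conjugate. Concretely, $\Psi(\xi\otimes\xi')$ is the rank-one operator $\eta\mapsto\langle\eta,\xi\rangle\xi'$, which is automatically Hilbert--Schmidt and therefore belongs to $\LL^2(\B(\mathcal{H}),\Tr)$.

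First I would verify that the assignment $(\xi,\xi')\mapsto\Psi(\xi\otimes\xi')$ has the correct (anti-)linearity properties: the antilinearity in $\xi$ induced by $\langle\cdot,\xi\rangle$ matches the conjugated scalar action on the factor $\overline{\mathcal{H}}$, so that $\Psi$ descends to a well-defined linear map from the algebraic tensor product into $\B(\mathcal{H})$.

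Next I would compute the inner products on elementary tensors:
\[
\langle\Psi(\xi_1\otimes\xi_1'),\Psi(\xi_2\otimes\xi_2')\rangle_{\Tr}=\Tr\bigl(\Psi(\xi_2\otimes\xi_2')^{*}\Psi(\xi_1\otimes\xi_1')\bigr).
\]
A short computation shows that $\Psi(\xi\otimes\xi')^{*}$ is again rank-one and that the composition of two rank-one operators is rank-one with a trace that is immediate to read off. Matching this (up to the conventions on $\mathcal{H}\otimes\overline{\mathcal{H}}$) with the tensor-product inner product establishes that $\Psi$ is isometric on elementary tensors, and by sesquilinearity and continuity it extends to an isometry $\mathcal{H}\otimes\overline{\mathcal{H}}\to \LL^2(\B(\mathcal{H}),\Tr)$.

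Finally, surjectivity follows from a density argument: the image of $\Psi$ contains every rank-one operator, hence by linearity every finite-rank operator; finite-rank operators are dense in the Hilbert--Schmidt class, which in turn is dense in $\LL^2(\B(\mathcal{H}),\Tr)$ by its very construction as the completion of $\mathfrak{n}_{\Tr}$. The only mild subtlety is bookkeeping the sesquilinear conventions between $\mathcal{H}$, $\overline{\mathcal{H}}$, and the Hilbert--Schmidt inner product $\Tr(S^{*}T)$; this is the sole place requiring attention, but it poses no real obstacle, and the result is then a classical identification.
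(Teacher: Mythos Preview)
Your outline is correct and is precisely the classical argument for identifying the Hilbert--Schmidt class with $\mathcal{H}\otimes\overline{\mathcal{H}}$. Note, however, that the paper does not actually prove this proposition: it simply records the statement and refers to \cite{bekka2008kazhdan}, page~294, for the details. So there is no ``paper's own proof'' to compare against; your sketch is exactly the kind of verification one finds in that reference, and the only delicate point---which you already flagged---is keeping track of which factor carries the conjugate structure so that $\Psi$ is genuinely $\mathbb{C}$-linear and the two inner products match.
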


We are grateful to Alain Valette for the following remark. With \cref{IsomorphismHilbert} in mind, it is then easy to see that the tensor product $\pi \otimes \overline{\pi}$ is unitary equivalent to the adjoint representation of $\pi$. Therefore \cref{operatoralgebra} follows from two remarks:
\begin{enumerate}
\item Since $\pi$ is $C_0$, then $\pi \otimes \overline{\pi}$ is $C_0$.
\item Since $\pi$ and $\overline{\pi}$ admit almost invariant vectors, then $\pi \otimes \overline{\pi}$ also.
\end{enumerate}
Nevertheless for the sake of completeness we give a proof of \cref{operatoralgebra} which has the advantage to construct "by hand" an almost invariant sequence for the adjoint action of $G$. Moreover, it will complete and justify remark \ref{remarkexempleprojections}.\\
It still remains to prove that the action $\alpha$ is a measurable action.  
\begin{proposition}
	Fix $T\in \mathfrak{n}_{\Tr}$. Then the map form $G$ to $\C$ defined by $g\mapsto \Tr(T^*\alpha_g(T))$ is measurable.
\end{proposition}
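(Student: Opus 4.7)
The plan is to identify the function $g \mapsto \Tr(T^* \alpha_g(T))$ as a matrix coefficient of a strongly continuous unitary representation on $\mathcal{H}\otimes\overline{\mathcal{H}}$, from which continuity (and hence measurability) will be immediate.

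First I would observe that the inner product on $\LL^2(\B(\mathcal{H}),\Tr)$ is $\langle S, R\rangle_{\Tr} = \Tr(R^* S)$, so that
\[\Tr(T^* \alpha_g(T)) = \langle \alpha_g(T), T\rangle_{\Tr},\]
where the right-hand side makes sense because $\alpha_g(T) \in \mathfrak{n}_{\Tr}$ by the $\alpha$-invariance of $\Tr$ already established.

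The next step is to transport this matrix coefficient to $\mathcal{H}\otimes\overline{\mathcal{H}}$ via the isomorphism $\Psi$ of \cref{IsomorphismHilbert}. Using the formula $\Psi(\xi \otimes \xi')(\eta) = \langle \eta, \xi\rangle \xi'$, a short calculation on elementary tensors gives
\[\pi(g)\,\Psi(\xi\otimes\xi')\,\pi(g)^* = \Psi\bigl(\pi(g)\xi \otimes \pi(g)\xi'\bigr),\]
where on the right the first factor of $\pi(g)$ acts through the conjugate representation on $\overline{\mathcal{H}}$. By linearity and density, this shows that under $\Psi$ the adjoint action $\alpha$ is unitarily equivalent to $\pi \otimes \bar{\pi}$ on $\mathcal{H}\otimes\overline{\mathcal{H}}$. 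Setting $\zeta_T := \Psi^{-1}(T)$, we therefore have
\[\Tr(T^* \alpha_g(T)) = \bigl\langle (\pi\otimes\bar{\pi})(g)\,\zeta_T,\, \zeta_T\bigr\rangle.\]

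Finally, since $\pi$ is a strongly continuous unitary representation by hypothesis, so is $\bar{\pi}$, and hence so is $\pi \otimes \bar{\pi}$ (a standard density argument on elementary tensors, combined with the uniform bound $\|(\pi\otimes\bar{\pi})(g)\|=1$, reduces strong continuity of the tensor product to that of the factors). Matrix coefficients of strongly continuous unitary representations are continuous, so $g \mapsto \Tr(T^* \alpha_g(T))$ is continuous, and in particular measurable. The only step deserving genuine verification is the identification of $\alpha$ with $\pi \otimes \bar{\pi}$ under $\Psi$; everything else is routine.
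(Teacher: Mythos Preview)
Your argument is correct and actually yields a stronger conclusion (continuity, not merely measurability). The paper proceeds more directly: it expands $\Tr(T^*\alpha_g(T))=\sum_k f_k(g)$ with $f_k(g)=\langle \pi(g)T\pi(g)^*e_k,Te_k\rangle$, shows each $f_k$ is continuous via the strong continuity of $\pi$, and concludes that the pointwise limit of the continuous partial sums is measurable. Your route---transporting $\alpha$ through $\Psi$ to identify it with $\pi\otimes\bar\pi$ and then invoking strong continuity of tensor products---is exactly the observation the paper attributes to Alain Valette just after \cref{IsomorphismHilbert}, but which the paper chooses not to use for this particular proposition, preferring a bare-hands computation. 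Your approach is cleaner and more conceptual; the paper's has the minor advantage of avoiding any discussion of $\overline{\mathcal{H}}$ and the tensor-product representation (and in that spirit, you should double-check which tensor slot carries the conjugate structure in the paper's convention for $\Psi$: with $\Psi(\xi\otimes\xi')(\eta)=\langle\eta,\xi\rangle\xi'$, linearity forces the \emph{first} slot to be conjugate-linear, so the identification is with $\bar\pi\otimes\pi$ rather than $\pi\otimes\bar\pi$; this is cosmetic but worth stating correctly).
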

\begin{proof}
	As we have $$g\mapsto \Tr(T^*\alpha_g(T))=\sum_k\langle \pi(g)T \pi(g)^*e_k,Te_k\rangle,$$
	it suffices to show that the maps $f_k\colon g\mapsto \langle \pi(g) T \pi(g)^*e_k,Te_k\rangle$ are measurable for all $k$. Using Cauchy-Schwarz inequality we have:
	\begin{align*}
	|f_k(g)-f_k(e)|&\le |f_k(g)-\langle \pi(g)Te_k,Te_k\rangle|+|\langle \pi(g)Te_k,Te_k\rangle-\langle Te_k, Te_k\rangle |\\
	&\le \|\pi(g)\|\|T\|\|(\pi(g)^*-\text{Id})e_k\|\|Te_k\| +|\langle \pi(g)Te_k,Te_k\rangle-\langle Te_k, Te_k\rangle |.
	\end{align*}
	So, as $\text{SOT}\lim_{g\rightarrow e}\pi(g)=\text{Id}$, then $f_k(g)\rightarrow_{g\rightarrow e} f_k(e)$. Thus for every $k$ we have that $f_k$ is continuous. In particular $f_k$ is measurable for every $k$.
	Hence, the map $g\mapsto \Tr(T^*\alpha_g(T))$ is measurable as it is the simple limit of measurable functions.
\end{proof}
\begin{proposition}\label{operatoralgebra}
	Let $G$ be an l.c.s.c.\ group with the Haagerup property. Then there exists an action of $G$ on $(\B(\mathcal{H}),\Tr)$, which is $C_0$ for the trace $\Tr$ and it has an almost invariant sequence.	
\end{proposition}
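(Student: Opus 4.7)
The plan is to use the identification of $\LL^2(\B(\mathcal{H}),\Tr)$ with $\mathcal{H}\otimes \overline{\mathcal{H}}$ from \cref{IsomorphismHilbert} and to check the two required properties on a dense subspace, namely on the finite rank operators corresponding to finite linear combinations of simple tensors. Under $\Psi$, the extension of $\alpha$ to $\LL^2(\B(\mathcal{H}),\Tr)$ becomes the tensor product representation $\pi\otimes \overline{\pi}$, so that for any $\xi,\xi',\eta,\eta'\in \mathcal{H}$ one has
\[
\langle (\pi(g)\otimes \overline{\pi}(g))(\xi\otimes \xi'),\eta\otimes \eta'\rangle
= \langle \pi(g)\xi,\eta\rangle\,\overline{\langle \pi(g)\xi',\eta'\rangle}.
\]
This is a product of two $C_0$ functions on $G$ (because $\pi$ is of class $C_0$), hence $C_0$ itself. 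A standard density argument, using the fact that the finite-rank operators are dense in the Hilbert–Schmidt operators $\mathfrak{n}_{\Tr}$ together with a Cauchy–Schwarz estimate, then transfers this property to arbitrary $T\in \mathfrak{n}_{\Tr}$, yielding $\lim_{g\to\infty}\Tr(T^*\alpha_g(T))=0$ and, by polarization, $\lim_{g\to\infty}\Tr(S\alpha_g(T))=0$ for all $S,T\in \mathfrak{n}_\Tr$. So the action is $C_0$ for $\Tr$.

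For the almost invariant sequence I would construct explicit projections by hand, as announced. Since $\pi$ weakly contains the trivial representation, there exists a sequence of unit vectors $(\xi_n)_{n\ge 1}$ in $\mathcal{H}$ with $\sup_{g\in K}\|\pi(g)\xi_n-\xi_n\|\to 0$ for every compact $K\subset G$. Let $e_n\in \B(\mathcal{H})$ be the rank-one orthogonal projection onto $\C\xi_n$, i.e.\ $e_n(\eta)=\langle \eta,\xi_n\rangle\xi_n$. Then $e_n=e_n^*=e_n^2$, and $\Tr(e_n^*e_n)=\Tr(e_n)=1$, so $e_n\in \mathfrak{n}_\Tr$ with $\|e_n\|_\Tr=1$. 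Moreover $\alpha_g(e_n)$ is the rank-one projection onto $\C\pi(g)\xi_n$, whence
\[
\|\alpha_g(e_n)-e_n\|_\Tr^2 \;=\; \Tr\bigl((\alpha_g(e_n)-e_n)^2\bigr) \;=\; 2-2\,\Tr(\alpha_g(e_n)e_n) \;=\; 2-2\,|\langle \pi(g)\xi_n,\xi_n\rangle|^2.
\]
Since $\|\pi(g)\xi_n-\xi_n\|\to 0$ uniformly on compact sets forces $|\langle \pi(g)\xi_n,\xi_n\rangle|\to 1$ uniformly on compact sets, this gives the required \eqref{property2}.

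The conceptual input (the equivalence between the adjoint representation and $\pi\otimes \overline{\pi}$ and the fact that a tensor product of $C_0$ representations is $C_0$) does all the heavy lifting, so there is no serious obstacle. The most delicate bookkeeping is the density argument extending the $C_0$ estimate from finite-rank operators to the whole Hilbert–Schmidt ideal $\mathfrak{n}_\Tr$; this is handled by approximating $T\in \mathfrak{n}_\Tr$ in $\|\cdot\|_\Tr$-norm by finite-rank operators $T_k$ and using $|\Tr(T^*\alpha_g(T))-\Tr(T_k^*\alpha_g(T_k))|\le (\|T\|_\Tr+\|T_k\|_\Tr)\|T-T_k\|_\Tr$, which is uniform in $g$.
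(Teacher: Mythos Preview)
Your proposal is correct and follows essentially the same route as the paper: the paper also identifies the $\LL^2$-extension of $\alpha$ with $\pi\otimes\overline{\pi}$ via \cref{IsomorphismHilbert}, proves the $C_0$ property first on rank-one operators $T_{\xi,\eta}=\Psi(\xi\otimes\eta)$ and then passes to general $\mathfrak{n}_{\Tr}$ by the same $\|\cdot\|_{\Tr}$-density/Cauchy--Schwarz argument you describe, and takes the almost invariant projections to be the rank-one projections $P_n=\Psi(\xi_n\otimes\xi_n)$ onto $\C\xi_n$. Your computation $\|\alpha_g(e_n)-e_n\|_{\Tr}^2=2-2|\langle\pi(g)\xi_n,\xi_n\rangle|^2$ is in fact the cleaner form of the paper's expression (the cross term in $\mathcal{H}\otimes\overline{\mathcal{H}}$ is $|\langle\pi(g)\xi_n,\xi_n\rangle|^2$, not $\mathrm{Re}(\langle\pi(g)\xi_n,\xi_n\rangle^2)$), so nothing is missing.
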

\begin{proof}
	We begin to show the existence of a sequence of almost invariant projections. As $G$ has the Haagerup property, there exists, by \cref{Haagerp Definition}, a sequence of unit vectors $(\xi_n)_n$ in $\mathcal{H}$ such that for every compact sets $K$ of $G$:
	$$\lim_{n\rightarrow \infty} \sup_{g\in K} \|\pi(g)\xi_n-\xi_n\|=0.$$
	Set $P_n=\Psi(\xi_n\otimes \xi_n)$. Then we have:
	\begin{align*}
	\Tr(P_n^*P_n)=\sum_k\langle P_ne_k,P_ne_k\rangle&=\sum_k\langle \langle e_k, \xi_n\rangle \xi_n,  \langle e_k, \xi_n\rangle \xi_n \rangle\\
	&=\sum_k | \langle e_k, \xi_n\rangle|^2=\|\xi_n\|^2=1,
	\end{align*}
	where we used Parseval's identity. Moreover
	\begin{align*}
	\alpha_g(P_n)(\eta)&=\pi(g)P_n\pi(g^{-1})(\eta)\\
	&=\langle \eta, \pi(g) \xi_n\rangle \pi(g) \xi_n\\
	&=\Psi(\pi(g)\xi_n \otimes \pi(g)\xi_n)(\eta).
	\end{align*}
	Therefore:
	\begin{align*}
	\|\alpha_g(P_n)-P_n\|_{\Tr}^2&=\|\pi(g)\xi_n \otimes \pi(g)\xi_n - \xi_n\otimes\xi_n\|^2\\
	&=\left(\|\pi(g)\xi_n\|^4-2\text{Re}\left(\langle\pi(g)\xi_n,\xi_n\rangle^2\right)+ \|\xi_n\|^4\right)\\
	&=2\left(1-\text{Re}\left(\langle\pi(g)\xi_n,\xi_n\rangle^2\right)\right).\\
	\end{align*}
	Hence for every compact sets $K$ of $G$ we have:
	$$\lim_{n\rightarrow \infty} \sup_{g\in K} \|\alpha_g(P_n)-P_n\|_{\Tr}=0.$$
	It remains to show that $\alpha$ is $C_0$ for $\Tr$. First take elements of the form $\xi_1\otimes \eta_1, \xi_2\otimes \eta_2\in \mathcal{H}\overline{\otimes} \mathcal{H}$ and consider their image by $\Psi$ denoted $T_{\xi_1,\eta_1},T_{\xi_2,\eta_2} \in \mathfrak{n}_{\Tr}$. By Cauchy-Schwarz inequality we have:
	\begin{align*}
	\Tr(T_{\xi_1,\eta_1}^* \alpha_g(T_{\xi_2,\eta_2}))&=\sum_k \langle \pi(g)T_{\xi_2,\eta_2}\pi(g)^*e_k, T_{\xi_1,\eta_1}e_k\rangle\\
	&=\sum_k \langle \pi(g)\langle\pi(g^{-1})e_k, \xi_2\rangle \eta_2, \langle e_k, \xi_1\rangle \eta_1\rangle\\
	&=\langle\pi(g)\eta_2,\eta_1\rangle \sum_k \langle \pi(g^{-1})e_k,\xi_2\rangle\langle e_k, \xi_1\rangle\\
	&\le \langle\pi(g)\eta_2,\eta_1\rangle \|\xi_1\|\|\pi(g)\xi_2\|.
	\end{align*}
	As $\pi$ is a unitary $C_0$-representation, then we have $$\lim_{g\rightarrow \infty}\Tr(T_{\xi_1,\eta_1}^* \alpha_g(T_{\xi_2,\eta_2}))=0.$$
	Now take $T_1,T_2\in \mathfrak{n}_{\Tr}$. By Proposition \ref{IsomorphismHilbert} there exists $\xi,\eta\in \mathcal{H}\overline{\otimes} \mathcal{H}$ which represent $T_1,T_2$. Moreover for $\varepsilon>0$ fixed, there exists sequences $(\xi_n),(\eta_n)\subset \mathcal{H}$ such that:
	\begin{enumerate}
		\item $\| \xi - \sum_{n=1}^{N_{\varepsilon}} e_n \otimes \xi_n \|\le \varepsilon;$
		\item $\| \eta - \sum_{n=1}^{N_{\varepsilon}} e_n \otimes \eta_n \|\le \varepsilon.$
	\end{enumerate}
	Thus
	\begin{align*}
	\Tr(T_1^*\alpha_g(T_2))&=\langle \Psi\left(\xi -\sum_{n=1}^{N_{\varepsilon}}e_n \otimes \xi_n\right), \alpha_g(\Psi(\eta))\rangle_{\Tr}+\langle \Psi\left(\sum_{n=1}^{N_{\varepsilon}} e_n \otimes \xi_n\right), \alpha_g(\Psi(\eta))\rangle_{\Tr}\\
	&=\langle \Psi\left(\xi-\sum_{n=1}^{N_{\varepsilon}}e_n \otimes \xi_n\right), \alpha_g(\Psi(\eta))\rangle_{\Tr} +\\
	&\langle \Psi\left(\sum_{n=1}^{N_{\varepsilon}} e_n \otimes \xi_n\right), \alpha_g\left(\Psi\left(\eta-\sum_{n=1}^{N_{\varepsilon}} e_n\otimes \eta_n\right)\right)\rangle_{\Tr}\\
	&+\langle \Psi\left(\sum_{n=1}^{N_{\varepsilon}} e_n\otimes \xi_n\right), \alpha_g\left(\Psi\left(\sum_{n=1}^{N_{\varepsilon}}e_n\otimes \eta_n\right)\right)\rangle_{\Tr}
	\end{align*}
	Hence by Cauchy Schwarz and using that $\alpha$ is $C_0$ for the elements $T_{\xi_i,\eta_i}$, there exists a compact set $K$ of $G$ such that for all $g\in G\setminus K$ :
	\begin{align*}
	\Tr(T_1^*\alpha_g(T_2))&\le \varepsilon \|T_2\|_{\Tr} + \varepsilon\sqrt{\|T_1\|_{\Tr}+\varepsilon} +\sum_{i,j=1}^{N_{\varepsilon}} \frac{\varepsilon}{N_{\varepsilon}^2}\\
	&\le \varepsilon(1+\|T_2\|_{\Tr}+\sqrt{\|T_1\|_{\Tr}+\varepsilon}).
	\end{align*}
\end{proof}
\begin{remark}\label{remarkexempleprojections}
	In the context of Theorem \ref{THM DJZ}, this construction gives a $C_0$-action on $(\B(\LL^2(\Omega,\mu)),\Tr)$ relatively to the permutation representation $\pi_{\Omega}$. Moreover an almost invariant sequence of projections is given by the family $(A_n)_n\subset \mathcal{B}$ considering $$P_n(f)=\frac{\langle f,\chi_{A_n}\rangle}{\mu(A_n)}\chi_{A_n},$$
	for any $f\in L^2(\Omega).$
\end{remark}

\subsection{Actions on crossed products}
We begin this subsection with a brief recall on crossed products. Let $N$ be a von Neumann algebra acting on $\mathcal{H}$. Let $\Gamma$ be a discrete group and consider $\alpha \colon \Gamma \to \text{Aut}(N)$ an action of $\Gamma$ on $N$. We denote by $\mathcal{K}=l^2(\Gamma,\mathcal{H})$ the Hilbert space of all square integrable $\mathcal{H}$-valued functions. Now take the normal representation $\pi_{\alpha}$ of $N$ defined by 
$$(\pi_{\alpha}(x)\xi)(s)=\alpha_s^{-1}(x)\xi(s),\; \xi\in \mathcal{K}, \; x\in N,\; s\in \Gamma.$$
Moreover consider the unitary representation $\lambda$ of $\Gamma$:
$$(\lambda(t)\xi)(s)=\xi(t^{-1}s),\;  \xi\in \mathcal{K}, \; s,t\in \Gamma,$$
which is covariant with $\pi_{\alpha}$, that is $$\lambda(s)\pi_{\alpha}(x)\lambda(s)^*=\pi_{\alpha}\circ \alpha_s(x),\; x\in N, \; s\in \Gamma.$$
\begin{definition}
	The von Neumann algebra generated by $\pi_{\alpha}(N)$ and $\lambda(\Gamma)$ on $\mathcal{K}$ is called the crossed product of $N$ by $\alpha$ and denoted by $N\rtimes_{\alpha} \Gamma$.
\end{definition}
To each $x\in N \rtimes_{\alpha} \Gamma$ one can associate a function $x: \Gamma \to N$ with the idea that $x$ is represented by $\sum \pi_{\alpha}(x(g))\lambda(g)$. It is well known that this function uniquely determines $x$ (\cite{takesaki2013theory2}, p.366, \cite{pedersen1979c}, p.284). However the convergence of the series does not take place in any of the usual topologies on $N\rtimes_{\alpha} \Gamma$ (\cite{mercer1985convergence}).\\
Let $\varphi$ a faithful normal semi-finite weight on $N$ and consider $\hat{\varphi}$ the canonical induced weight on $N\rtimes_{\alpha} \Gamma$:
$$\hat{\varphi}(x)=\varphi(x(e)),\; x\in N\rtimes_{\alpha} \Gamma.$$
Furthermore we suppose that $\varphi$ is $\alpha$-invariant.
\begin{proposition}[Corollary $4$, \cite{mercer1985convergence}]
	For every $x\in N\rtimes_{\alpha} \Gamma$ we can find a unique $N$ valued function on $\Gamma$, also denoted by $x$, such that
	$\sum_{s\in \Gamma} \pi_{\alpha}(x(s))\lambda(s)$ converges in $L^2(N\rtimes_{\alpha} \Gamma, \hat{\varphi})$ to $x$.
\end{proposition}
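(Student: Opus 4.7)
The plan is to reduce the statement to a standard Fourier-type decomposition in the $L^2$ space of the crossed product. The first step is to establish the canonical identification of Hilbert spaces
\[ L^2(N \rtimes_{\alpha} \Gamma, \hat{\varphi}) \cong L^2(N, \varphi) \otimes \ell^2(\Gamma), \]
under which $\pi_{\alpha}(y)\lambda(s)$ corresponds to $y \otimes \delta_s$ for every $y \in \mathfrak{n}_{\varphi}$ and $s \in \Gamma$. This is a consequence of the dual weight construction together with the $\alpha$-invariance of $\varphi$; the computation $\|\pi_{\alpha}(y)\lambda(s)\|_{\hat{\varphi}}^2 = \hat{\varphi}(\lambda(s)^* \pi_{\alpha}(y^*y)\lambda(s)) = \varphi(\alpha_s^{-1}(y^*y)) = \|y\|_{\varphi}^2$, combined with orthogonality in $s$, is what underlies the isometry.

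Next, for $x \in N \rtimes_{\alpha} \Gamma$ (more precisely, with $x \in \mathfrak{n}_{\hat{\varphi}}$, which is what makes the $L^2$-convergence statement meaningful), I would define the Fourier coefficient $x(s) \in N$ by extracting the zeroth coefficient of $x\lambda(s)^*$ through the canonical normal conditional expectation $E : N \rtimes_{\alpha} \Gamma \to \pi_{\alpha}(N)$ of the crossed product; that is, $\pi_{\alpha}(x(s)) = E(x\,\lambda(s)^*)$. This produces a genuine element of $N$ for every $s$, is defined on all of $N \rtimes_{\alpha} \Gamma$, and agrees with the obvious coefficient on finite sums $\sum_t \pi_{\alpha}(y_t)\lambda(t)$. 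Under the Hilbert space identification above, the image of $x$ decomposes as the family $(x(s))_{s\in \Gamma}$ viewed in $L^2(N, \varphi) \otimes \ell^2(\Gamma)$, and a Parseval-type identity yields
\[ \sum_{s\in \Gamma} \|x(s)\|_{\varphi}^2 \;=\; \|x\|_{\hat{\varphi}}^2 \;<\; \infty. \]
Hence the partial sums $\sum_{s \in F} \pi_{\alpha}(x(s))\lambda(s)$ over finite $F \subset \Gamma$ form a Cauchy net in $L^2$, and they converge to $x$ by construction.

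Uniqueness is immediate: if $y$ is another $N$-valued function on $\Gamma$ with the same $L^2$-convergence property, then $\sum_s \pi_{\alpha}(x(s) - y(s))\lambda(s) = 0$ in $L^2$, and by the orthogonality of the system $\{z \otimes \delta_s\}_{z,s}$ the element $x(s) - y(s)$ vanishes in $L^2(N, \varphi)$ for every $s$, hence in $N$. The main technical obstacle is establishing the Hilbert space isomorphism in the general semifinite setting where $\varphi$ need not be a trace: once this is in place, everything else is a Parseval argument and the only delicate point is verifying that the coefficients produced by $E$ really do lie in $\mathfrak{n}_{\varphi}$ with the correct square-summability. This isomorphism rests on Takesaki's duality theory for dual weights on crossed products, which is where the heart of Mercer's original argument is concentrated.
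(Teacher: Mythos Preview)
The paper does not give its own proof of this proposition: it is stated as a citation (Corollary~4 of Mercer's article) and used as a black box. There is therefore nothing in the paper to compare your argument against.

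That said, your sketch is the standard and correct route. The identification $L^2(N\rtimes_\alpha\Gamma,\hat\varphi)\cong L^2(N,\varphi)\otimes\ell^2(\Gamma)$ (valid because $\varphi$ is $\alpha$-invariant, so the dual weight has a particularly simple form), followed by extraction of Fourier coefficients via the $\hat\varphi$-preserving conditional expectation $E$ onto $\pi_\alpha(N)$ and a Parseval argument, is exactly how Mercer proceeds. Your caveat that the statement only makes literal sense for $x\in\mathfrak{n}_{\hat\varphi}$ is well taken; the paper silently restricts to this case in the subsequent computations (Lemma~\ref{summable} and equation~(\ref{convergenceL2})). The one point you flag as delicate---that the coefficients $x(s)$ genuinely lie in $\mathfrak{n}_\varphi$ with $\sum_s\|x(s)\|_\varphi^2<\infty$---is precisely what the paper records separately as Lemma~\ref{summable}, so your instinct about where the work lies matches the paper's own emphasis.
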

Recall that by "$\sum_{s\in \Gamma} \pi_{\alpha}(x(s))\lambda(s)$ converges in topology $T$" we mean that the net $$\{\sum_{g\in F} \pi_{\alpha}(x(g))\lambda(g)\}_{F}$$ on the directed set of all finite subsets $F$ of $\Gamma$ converges in the topology $T$. For any $x\in N\rtimes_{\alpha} \Gamma$ we denote $x_F=\sum_{g\in F} \pi_{\alpha}(x(g))\lambda(g)$. Then applying Proposition $3$ of \cite{mercer1985convergence} to $\hat{\varphi} \circ \pi_{\alpha}$ one has:
\begin{equation}\label{convergenceL2}
(\hat{\varphi}(xx^*)-\hat{\varphi}(x_Fx_F^*))\rightarrow 0.
\end{equation}
\begin{lemma}\label{summable}
	The function $x$ of $\Gamma$ in $N$ representing $x\in \mathfrak{n}_{\hat{\varphi}}$ is $\LL^2$-square summable.
\end{lemma}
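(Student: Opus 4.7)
The goal is to show that for every $x\in \mathfrak n_{\hat\varphi}$, the associated function $g\mapsto x(g)\in N$ satisfies $\sum_{g\in\Gamma}\varphi(x(g)^*x(g))<\infty$. My plan is to evaluate $\hat\varphi(x_F^*x_F)$ explicitly for each finite subset $F\subset\Gamma$, verify that it collapses to a sum of non-negative scalars indexed by $F$, and then pass to the limit using the $L^2$-convergence $x_F\to x$ recalled just above the statement.

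For the explicit evaluation, I will expand
\[ x_F^*x_F = \sum_{g,h\in F}\lambda(g^{-1})\pi_\alpha(x(g)^*x(h))\lambda(h)\]
and push the $\lambda$'s past the $\pi_\alpha$'s using the covariance relation $\lambda(s)\pi_\alpha(y)=\pi_\alpha(\alpha_s(y))\lambda(s)$. Each summand is thereby brought into the canonical form $\pi_\alpha\bigl(\alpha_{g^{-1}}(x(g)^*x(h))\bigr)\lambda(g^{-1}h)$. Because $\hat\varphi$ is defined by $\hat\varphi(y)=\varphi(y(e))$ and so reads only the coefficient at the neutral element, every off-diagonal contribution ($g\ne h$, i.e.\ $g^{-1}h\ne e$) vanishes. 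The diagonal, combined with $\alpha$-invariance of $\varphi$, gives
\[ \hat\varphi(x_F^*x_F)=\sum_{g\in F}\varphi\bigl(\alpha_{g^{-1}}(x(g)^*x(g))\bigr)=\sum_{g\in F}\varphi(x(g)^*x(g)).\]

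The limit step is then immediate: since $x_F\to x$ in $L^2(N\rtimes_\alpha\Gamma,\hat\varphi)$, continuity of the Hilbert space norm yields $\hat\varphi(x_F^*x_F)\to \hat\varphi(x^*x)<\infty$. Because the partial sums $\sum_{g\in F}\varphi(x(g)^*x(g))$ are monotone increasing in $F$, their limit along the directed set of finite subsets of $\Gamma$ coincides with the unordered sum, and one concludes
\[ \sum_{g\in\Gamma}\varphi(x(g)^*x(g))=\hat\varphi(x^*x)<\infty,\]
which is precisely the claimed $L^2$-square summability (and, as a byproduct, a Parseval-type identity). The only genuinely delicate point I anticipate is the bookkeeping in the first step: one has to justify carefully that the selection rule $\hat\varphi(\pi_\alpha(y)\lambda(s))=\delta_{s,e}\varphi(y)$ applies term-by-term to the finite expansion of $x_F^*x_F$, so that the off-diagonal terms really do drop out before any limit is taken. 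Once that verification is in place, the rest of the argument is purely mechanical.
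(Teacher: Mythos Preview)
Your proposal is correct and follows essentially the same approach as the paper: expand $x_F^*x_F$, use the covariance relation to bring each term into canonical form, pick out the diagonal via $\hat\varphi(y)=\varphi(y(e))$, apply $\alpha$-invariance of $\varphi$, and pass to the limit via the $L^2$-convergence $x_F\to x$. The ``delicate point'' you flag is in fact harmless: the expansion of $x_F^*x_F$ is a \emph{finite} sum, so the selection rule applies term-by-term simply by linearity of $\hat\varphi$, with no limit interchange needed at that stage.
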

\begin{proof}
	For any finite set $F\subset \Gamma$ one has :
	\begin{align*}
	\|x_F\|_{\hat{\varphi}}^2=\hat{\varphi}(x_F^*x_F)&=\hat{\varphi}\left(\sum_{s,t\in F} \pi_{\alpha}(\alpha_s^{-1}(x(s)^*))\lambda(s)^*\pi_{\alpha}(x(t))\lambda(t)\right)\\
	&=\hat{\varphi}\left(\sum_{s,t\in F} \pi_{\alpha}(\alpha_s^{-1}(x(s)^*x(t)))\lambda(s^{-1}t)\right)\\
	&=\varphi\left(\sum_{s\in F} \alpha_{s^{-1}}(x(s)^*x(s))\right)\\
	&=\sum_{s\in F} \varphi(\alpha_{s^{-1}}(x(s)^*x(s)))\\
	&=\sum_{s\in F} \|x(s)\|_{\varphi}^2.
	\end{align*}
We conclude the proof using the normality of the weight and equation (\ref{convergenceL2}).
\end{proof}
Now suppose that $G$ has a $C_0$ action with an almost invariant sequence on $(N,\varphi)$, denoted $\beta$, which commutes with $\alpha$, that is $\alpha_s \circ \beta_g=\beta_g \circ \alpha_s$ for every $s\in \Gamma$, $g\in G$. Thus we can define an action $\iota$ of $G$ on $N\rtimes_{\alpha} \Gamma$ by 
$$\iota_g(x)=\sum_{s\in \Gamma} \pi_{\alpha}(\beta_g(x(s)))\lambda(s),\; x\in N\rtimes_{\alpha} \Gamma.$$
Then $\hat{\varphi}$ is $\iota$-invariant as $\varphi$ is $\beta$-invariant. Moreover this action is measurable in the following sense:
\begin{proposition}
	For every $x\in \mathfrak{n}_{\hat{\varphi}}$ fixed, the map from $G$ to $\C$ defined by $g\mapsto \hat{\varphi}(x^*\iota_g(x))$ is measurable.
\end{proposition}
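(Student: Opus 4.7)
The plan is to expand $x$ as the $L^{2}$-convergent series $\sum_{s\in\Gamma}\pi_{\alpha}(x(s))\lambda(s)$ supplied by the preceding proposition, compute the inner product $\hat{\varphi}(x^{*}\iota_{g}(x))$ term by term on finite truncations, and then pass to the limit to obtain an explicit series formula of the form
\[
\hat{\varphi}(x^{*}\iota_{g}(x))=\sum_{s\in\Gamma}\varphi\bigl(x(s)^{*}\beta_{g}(x(s))\bigr).
\]
Measurability of the left-hand side in $g$ will then be reduced to measurability of each term, which is already guaranteed by the fact that $\beta$ is a measurable action on $(N,\varphi)$ in the sense of \cref{representation}.

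More precisely, first I would fix a finite subset $F\subset\Gamma$ and let $x_{F}=\sum_{g\in F}\pi_{\alpha}(x(g))\lambda(g)$. A direct computation, exactly as in the proof of \cref{summable} but with the second factor replaced by $\iota_{g}(x_{F})=\sum_{t\in F}\pi_{\alpha}(\beta_{g}(x(t)))\lambda(t)$, gives
\[
\hat{\varphi}\bigl(x_{F}^{*}\iota_{g}(x_{F})\bigr)=\sum_{s\in F}\varphi\bigl(\alpha_{s^{-1}}(x(s)^{*}\beta_{g}(x(s)))\bigr)=\sum_{s\in F}\varphi\bigl(x(s)^{*}\beta_{g}(x(s))\bigr),
\]
where the second equality uses $\alpha$-invariance of $\varphi$ together with the fact that $\alpha$ and $\beta$ commute, so only the diagonal terms $s=t$ survive under $\hat{\varphi}$.

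Next I would pass to the limit over the directed set of finite subsets. Since $x_{F}\to x$ in $L^{2}(N\rtimes_{\alpha}\Gamma,\hat{\varphi})$ and $\iota_{g}$ extends to an isometry of this Hilbert space (because $\hat{\varphi}$ is $\iota$-invariant), the scalar $\hat{\varphi}(x_{F}^{*}\iota_{g}(x_{F}))=\langle\iota_{g}(x_{F}),x_{F}\rangle_{\hat{\varphi}}$ converges to $\hat{\varphi}(x^{*}\iota_{g}(x))$ for every $g\in G$. By \cref{summable} the family $(\|x(s)\|_{\varphi}^{2})_{s\in\Gamma}$ is summable, so Cauchy--Schwarz on $\varphi$ bounds each term by $\|x(s)\|_{\varphi}^{2}$, yielding absolute convergence of the series and the identity
\[
\hat{\varphi}(x^{*}\iota_{g}(x))=\sum_{s\in\Gamma}\varphi\bigl(x(s)^{*}\beta_{g}(x(s))\bigr)
\]
for every $g\in G$.

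Finally, since $x(s)\in\mathfrak{n}_{\varphi}$ for each $s$ (again by \cref{summable}) and since $\beta$ is assumed to extend to a unitary representation on $L^{2}(N,\varphi)$, \cref{representation} gives that $g\mapsto\varphi(x(s)^{*}\beta_{g}(x(s)))$ is measurable for each individual $s$. The countable sum of measurable functions is measurable, so $g\mapsto\hat{\varphi}(x^{*}\iota_{g}(x))$ is measurable, as required. The only real subtlety is step two, namely handing the $L^{2}$-only convergence of the defining series of $x$ carefully so that the exchange between summation and evaluation of $\hat{\varphi}$ is fully justified; but the isometric action of $\iota_{g}$ on $L^{2}(N\rtimes_{\alpha}\Gamma,\hat{\varphi})$ makes this routine once the truncated identity is in place.
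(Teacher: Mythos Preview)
Your proof is correct and follows essentially the same route as the paper: compute $\hat{\varphi}(x_F^{*}\iota_g(x_F))$ on finite truncations to get a diagonal sum, pass to the limit using $L^2$-convergence of $x_F\to x$, and conclude measurability term by term from the measurability hypothesis on $\beta$. The only cosmetic difference is that the paper leaves each summand in the form $\varphi\bigl(\alpha_s^{-1}(x(s))^{*}\beta_g(\alpha_s^{-1}(x(s)))\bigr)$ rather than applying $\alpha$-invariance of $\varphi$ to simplify it to $\varphi\bigl(x(s)^{*}\beta_g(x(s))\bigr)$ as you do, and your justification of the limit via the isometry of $\iota_g$ on $L^2$ is in fact slightly more explicit than the paper's.
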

\begin{remark}
	To prove this proposition we need an observation. Let $F$ be a finite subset of $\Gamma$ and consider an element in $N$ of the form  $$\sum_{s\in F} x(s)y(s).$$
	Then we have that:
	\begin{equation}\label{normality}
	\varphi\left(\sum_{s\in F} x(s)y(s)\right)=\sum_{s\in F} \varphi(x(s)y(s)).
	\end{equation}
	As $\varphi\left(\sum_{s\in F} x(s)y(s)\right)$ can be derived from:
	\begin{align*}
	\varphi\left( \sum_{s\in F} (x(s) + y(s))^*(x(s)+y(s))\right),\; \varphi\left( \sum_{s\in F}  y(s)^*y(s)\right),\; \varphi\left( \sum_{s\in F} x(s)^*x(s)\right),
	\end{align*}
 we can prove that the real and the imaginary parts of the left hand side are equal to those in the right hand side in (\ref{normality}). 

\end{remark}
\begin{proof}
	Let $x\in \mathfrak{n}_{\hat{\varphi}}$ fixed. Then we have for any finite $F\subset \Gamma$:
	\begin{align*}
	\hat{\varphi}(x_F^*\iota_g(x_F))&=\hat{\varphi}\left(\sum_{s,t\in F} \pi_{\alpha}(\alpha_s^{-1}(x(s)^*))\lambda(s)^* \pi_{\alpha}(\beta_g(x(t)))\lambda(t)\right)\\
	&=\hat{\varphi}\left(\sum_{s,t\in F} \pi_{\alpha}(\alpha_s^{-1}(x(s)^*)\beta_g(\alpha_s^{-1}(x(t))))\lambda(s^{-1}t)\right)\\
	&=\varphi\left(\sum_{s\in F} \alpha_s^{-1}(x(s))^* \beta_g(\alpha_s^{-1}(x(s)))\right)
	\end{align*}
	By the previous remark and equation (\ref{convergenceL2}), we have:
	$$\hat{\varphi}(x^*\iota_g(x))=\sum_{s\in \Gamma} \varphi\left(\alpha_s^{-1}(x(s))^* \beta_g(\alpha_s^{-1}(x(s)))\right).$$
	As the map $g\mapsto \varphi(y^*\beta_g(y))$ is measurable for every $y \in \mathfrak{n}_{\varphi}$ fixed, then $g\mapsto \hat{\varphi}(x^*\iota_g(x))$ is a simple limit of measurable functions. Therefore this map is measurable.
\end{proof}
\begin{proposition}\label{crossedproduct}
	Let $G$ be an l.c.s.c.\ group with the Haagerup property, let $\Gamma$ be a discrete group and let $N$ be a von Neumann algebra, with a faithful normal semi-finite weight $\varphi$, such that $G$ and $\Gamma$ have commuting actions $\beta$ and $\alpha$. Suppose $\beta$ is $C_0$ for $\varphi$ with an almost invariant sequence. Then $G$ has a $C_0$-action on $N\rtimes_\alpha \Gamma$ with an almost invariant sequence for $\hat{\varphi}$. 
\end{proposition}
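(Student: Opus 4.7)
The natural candidate is the action $\iota$ already introduced in the text, namely $\iota_g(x)=\sum_{s\in\Gamma}\pi_\alpha(\beta_g(x(s)))\lambda(s)$. The weight $\hat{\varphi}$ is $\iota$-invariant since $\varphi$ is $\beta$-invariant, and the measurability of the map $g\mapsto \hat{\varphi}(x^*\iota_g(x))$ was established just above the statement. So two things remain: the $C_0$ property and the existence of an almost invariant sequence.

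For the $C_0$ property, fix $x,y\in\mathfrak{n}_{\hat{\varphi}}$ and repeat the same finite-sum computation used in the measurability proof, but with $y_F^*$ instead of $x_F^*$. Using the covariance relation $\lambda(s)^*\pi_\alpha(z)\lambda(t)=\pi_\alpha(\alpha_{s^{-1}}(z))\lambda(s^{-1}t)$ and the assumption that $\alpha$ and $\beta$ commute, one obtains
\[
\hat{\varphi}(y^*\iota_g(x))=\sum_{s\in\Gamma}\varphi\bigl(\alpha_{s^{-1}}(y(s))^*\,\beta_g(\alpha_{s^{-1}}(x(s)))\bigr).
\]
By the Cauchy--Schwarz inequality for the positive sesquilinear form associated with $\varphi$, together with the $\beta$- and $\alpha$-invariance of $\varphi$, each summand is bounded in modulus by $\|y(s)\|_\varphi\,\|x(s)\|_\varphi$. \Cref{summable} gives $\sum_s\|y(s)\|_\varphi\|x(s)\|_\varphi<\infty$ by Cauchy--Schwarz on $\ell^2(\Gamma)$, so this provides an integrable (i.e.\ summable) dominating function independent of $g$. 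For each fixed $s$, the elements $\alpha_{s^{-1}}(x(s))$ and $\alpha_{s^{-1}}(y(s))$ belong to $\mathfrak{n}_\varphi$, hence the $C_0$ assumption on $\beta$ makes the $s$-th summand tend to $0$ as $g\to\infty$. Tannery's theorem (dominated convergence for series) yields $\hat{\varphi}(y^*\iota_g(x))\to 0$, which is precisely the $C_0$ property for $\iota$.

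For the almost invariant sequence, let $(e_k)_{k\ge 1}\subset\mathfrak{n}_\varphi$ be an almost invariant sequence of projections for $\beta$ and set $\tilde e_k:=\pi_\alpha(e_k)\in N\rtimes_\alpha\Gamma$. Each $\tilde e_k$ is a projection since $\pi_\alpha$ is a normal $*$-homomorphism, and its only non-zero Fourier coefficient is at the neutral element of $\Gamma$ with value $e_k$; consequently
\[
\|\tilde e_k\|_{\hat{\varphi}}^2=\hat{\varphi}(\pi_\alpha(e_k))=\varphi(e_k)=\|e_k\|_\varphi^2=1.
\]
Moreover $\iota_g(\tilde e_k)=\pi_\alpha(\beta_g(e_k))$, so
\[
\|\iota_g(\tilde e_k)-\tilde e_k\|_{\hat{\varphi}}^2=\varphi\bigl((\beta_g(e_k)-e_k)^*(\beta_g(e_k)-e_k)\bigr)=\|\beta_g(e_k)-e_k\|_\varphi^2,
\]
which tends to $0$ uniformly on compact subsets of $G$ by the almost invariance of $(e_k)$ for $\beta$. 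This shows $(\tilde e_k)$ is an almost invariant sequence of projections in $\mathfrak{n}_{\hat{\varphi}}$ for $\iota$, completing the proof.

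The only delicate step is the $C_0$ verification: one must interchange the limit in $g$ with the sum over $\Gamma$, which is why the uniform Cauchy--Schwarz bound coming from \Cref{summable} is essential. The rest amounts to transporting structure from $(N,\varphi,\beta)$ to $(N\rtimes_\alpha\Gamma,\hat{\varphi},\iota)$ through the canonical inclusion $\pi_\alpha$, which commutes with everything relevant thanks to the hypothesis $[\alpha,\beta]=0$.
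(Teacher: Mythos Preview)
Your proof is correct and follows essentially the same strategy as the paper's: the same action $\iota$, the same expansion of $\hat{\varphi}(y^*\iota_g(x))$ as a series indexed by $\Gamma$, the Cauchy--Schwarz bound combined with \cref{summable} to control the series, and the same almost invariant sequence $\pi_\alpha(e_k)$. The only cosmetic difference is that you invoke Tannery's theorem (dominated convergence for series) to interchange the limit in $g$ with the sum over $\Gamma$, whereas the paper spells this out as an explicit $\varepsilon/2$ argument (choose a finite $K\subset\Gamma$ absorbing the tail, then a compact $Q\subset G$ handling the finitely many remaining terms); these are the same argument in different clothing.
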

\begin{proof} 
	First we prove that the action $\iota$ of $G$ on $N\rtimes_\alpha \Gamma$ is $C_0$ for $\hat{\varphi}$. Let $x,y\in \mathfrak{n}_{\hat{\varphi}}$ with $x=\sum_{s\in \Gamma} \pi_{\alpha}(x(s))\lambda(s)$ and $y=\sum_{t\in \Gamma} \pi_{\alpha}(y(t))\lambda(t)$. Then  using the previous remark and by Cauchy-Schwarz inequality, we have:
	\begin{align*}
	\hat{\varphi}(\iota_g(x)y)&=\sum_{s\in \Gamma} \varphi\left( \beta_g(x(s))\alpha_s^{-1}(y(s))\right)\\
	&\le \sum_{s\in \Gamma} \|x(s)\|_{\varphi} \|y(s)\|_{\varphi} \\
	&\le \frac{1}{2}\sum_{s\in \Gamma}\left( \|x(s)\|_{\varphi}^2+ \|y(s)\|_{\varphi}^2\right).
	\end{align*}
	Then by Lemma \ref{summable} we deduce $$\sum_{s\in \Gamma}\left( \|x(s)\|_{\varphi}^2+ \|y(s)\|_{\varphi}^2\right)<\infty.$$
	Thus for $\varepsilon>0$ fixed, there exists a finite subset $K$ of $\Gamma$ such that
	$$\sum_{s\in K^c} \left(\|x(s)\|_{\varphi}^2+ \|y(s)\|_{\varphi}^2\right)<\varepsilon.$$
	Moreover for every $h\in K$, as $\beta$ is a $C_0$-action  for $\varphi$ there exists a compact subset $Q_h\subset G$ such that
	$$\varphi(\beta_g(x(h))\alpha_h^{-1}(y(h)))<\frac{\varepsilon}{2|K|}\quad \forall g\in G\setminus Q_h.$$
	Then set $Q=\bigcup_{h\in K} Q_h$ which is a compact of $G$ such that for all $g\in G\setminus Q$ we have:
	\begin{align*}
	|\hat{\varphi}(\iota_g(x)y)|&\le \frac{1}{2}\sum_{s\in K^c} \left(\|x(s)\|_{\varphi}^2+ \|y(s)\|_{\varphi}^2\right) + \sum_{h\in K} |\varphi(\beta_g(x(h))\alpha_h^{-1}(y(h)))|\\
	&< \frac{\varepsilon}{2} + |K| \frac{\varepsilon}{2|K|}=\varepsilon.
	\end{align*}
	Now by assumption there exists an almost invariant sequence $(e_k)_{k\ge 1}$ for $\beta$ and $\varphi$. Hence considering the following sequence:
	$$x_n=\pi_{\alpha}(e_n)\lambda(e),$$
	we obtain an almost invariant sequence for $\iota$ and $\hat{\varphi}$ as for all $k$ we have
	$$\|\iota_g(x_k)-x_k\|_{\hat{\varphi}}=\|\beta_g(e_k)-e_k\|_{\varphi}.$$
\end{proof}
Let $G$ be a locally compact second countable group with the Haagerup property. Then using Corollary \ref{cor:L_infty} there exists a $C_0$-action of $G$ with almost invariant sequence on $L^{\infty}(\Omega)$. Now suppose there exists a countable group $\Gamma$ acting on $L^{\infty}(\Omega)$ and commuting with the $G$-action. Then using Proposition \ref{crossedproduct}  there exists a $C_0$-action of $G$ on the crossed product algebra $L^{\infty}(\Omega)\rtimes\Gamma$ with an almost invariant sequence. Moreover if we assume that the action of $\Gamma$ is free and ergodic, then we get an action with the desired properties on a von Neumann algebra of type \rom{2}$_{\infty}$. The same can be deduced for the von Neumann algebra $\B(\LL^2(\Omega))\rtimes \Gamma$. When we consider the trivial action of $\Gamma$ in those examples, we find the well-known fact ( see \cref{prop:tensor}) that $G$ has a $C_0$-action with almost invariant sequence on $L^{\infty}(\Omega)\otimes L(\Gamma)$ or $\B(\LL^2(\Omega))\otimes L(\Gamma)$, where $L(\Gamma)$ is the group von Neumann algebra of $\Gamma$.\\
\linebreak
Our next goal is to consider actions of $G$ on the crossed product $N\rtimes_{\alpha} G$, where $\alpha$ is a $C_0$-action with an almost invariant sequence in $N$. In this context we recall that the $W^*$-crossed product can be seen as the weak closure of the $*$--algebra of operators $K(G,N)$($\sigma$-weakly continuous bounded functions with compact supports). For each $y\in K(G,N)$ and $\xi\in L^2(G,\mathcal{H})$, considering the regular representation $(\pi_{\alpha} \times \lambda, L^2(G,\mathcal{H}))$, we have :
$$(((\pi_{\alpha}\times \lambda)y)\xi)(t)=\int_{G} (\pi_{\alpha}(y(s))\lambda(s)\xi)(t)ds=\int_{G} \pi(\alpha_{t^{-1}}(y(s)))\xi(s^{-1}t)ds.$$
Moreover we recall that the involution and convolution in $K(G,N)$ is defined by :
$$y^*(t)=\Delta(t)^{-1}\alpha_t(y(t^{-1})^*)$$
$$(y\times z)(t)=\int y(s) \alpha_s(z(s^{-1}t))ds,$$
for all $y,z\in K(G,N)$, where $\Delta$ is the modular function. We refer to \cite{pedersen1979c} page $278$ for more details about $W^*$-crossed products with locally compact groups. Consider a faithful normal semifinite weight $\varphi$ on $N$. Now using theorem $3.1$ of \cite{haagerup1979dual}, we have the dual weight $\tilde{\varphi}$ which is such that for $x\in K(G,N)$ we have :
$$\tilde{\varphi}((\pi_{\alpha} \times \lambda)(x^*\times x))=\varphi((x^*\times x)(e)).$$
Now assume that there exists a $C_0$-action of $G$ on $N$, denoted $\alpha$, with an almost invariant sequence for $\varphi$.  We can then consider the $W^*$-crossed product $N\rtimes_{\alpha} G$ with its faithful normal semifinite weight $\tilde{\varphi}$. We define an action $\iota$ of $G$ on $N\rtimes_{\alpha} G$ by :
$$\iota_g(x)=\Delta(g)^{-1}\lambda(g)x\lambda(g)^*, \; x\in N\rtimes_{\alpha} G.$$ 
For $x\in K(G,N)$ that is :
\begin{align*}
(\iota_g(x)\xi)(t)&=((\lambda(g)(\pi_{\alpha}\times \lambda)(x)\lambda(g)^*)\xi)(t)\\ 
&=\int \pi(\alpha_{t^{-1}g}(x(g^{-1}s)))\xi(gs^{-1}t)ds,\\
\end{align*}
which is the same as :
\begin{align*}
\left(\left(\int \lambda(g) \pi_{\alpha}(x(s)) \lambda(s) \lambda(g)^* ds\right)\xi\right)(t)=\int \pi(\alpha_{t^{-1}g}(x(s)))\xi(gs^{-1}g^{-1}t)ds,
\end{align*}
where we used the change of variables $u=g^{-1}s$.\\
Moreover using Theorem $3.1$ part $(c)$ and $(d)$ in \cite{haagerup1979dual} we have that
\begin{align*}
\tilde{\varphi}(\iota_g(x^*x))&=(\varphi \circ \pi_{\alpha}^{-1})\circ T( \iota_g(x^*x))\\
&=(\varphi \circ \pi_{\alpha}^{-1})(\lambda(g)\pi_{\alpha}((x^*x)(e))\lambda(g)^*)\\
&=\varphi(\alpha_g((x^*x)(e))),
\end{align*} 
for $x\in K(G,N)$.
Hence $\iota$ preserves $\tilde{\varphi}$.\\
\begin{proposition}\label{unimodular}
Let $G$ be an l.c.s.c. unimodular group with the Haagerup property and let $N$ be a von Neumann algebra, with a faithful normal semi-finite weight $\varphi$ such that $G$ has a $C_0$-action $\alpha$ with an almost invariant sequence. Then $G$ has a $C_0$-action on $N\rtimes_{\alpha} G$ with an almost invariant sequence for $\tilde{\varphi}$. 
\end{proposition}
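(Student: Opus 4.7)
The plan is to verify the three properties of $\iota$: it extends to a unitary representation on $L^2(N\rtimes_\alpha G, \tilde{\varphi})$, it is $C_0$ for $\tilde{\varphi}$, and it admits an almost invariant sequence. The $\iota$-invariance of $\tilde{\varphi}$ having been established above, the unitary extension follows from \cref{representation} once we verify measurability of $g \mapsto \tilde{\varphi}(x^*\iota_g(x))$ for $x \in \mathfrak{n}_{\tilde{\varphi}}$; this proceeds as for the discrete crossed product, first reducing to $x \in K(G,N)$ via (\ref{convergenceL2}) and computing explicitly.

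For the $C_0$ property, by density it suffices to test on elementary elements $x = \pi_\alpha(a)\lambda(f)$ and $y = \pi_\alpha(b)\lambda(h)$ with $a, b \in \mathfrak{n}_\varphi$ and $f, h \in C_c(G)$. Combining the convolution formula for $K(G,N)$, unimodularity, and $\alpha$-invariance of $\varphi$ yields
\[
\tilde{\varphi}(\iota_g(x)y) = \int_G f(g^{-1}u^{-1}g)\, h(u)\, \varphi(\alpha_{ug}(a)b)\, du.
\]
The integrand is dominated by $\|f\|_\infty |h(u)|\, \|a\|_\varphi \|b\|_\varphi$, integrable on the compact support of $h$; and for each $u$ in that support, $ug \to \infty$ as $g \to \infty$, so the $C_0$ property of $\alpha$ forces the integrand to tend to $0$ pointwise. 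Dominated convergence concludes.

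For the almost invariant sequence, the natural candidate is $x_n = \pi_\alpha(e_n)\lambda(f_n)$, where $(e_n)$ is the given almost invariant sequence of projections in $N$ and $f_n \in C_c(G)$ satisfies $\|f_n\|_2 = 1$. Unimodularity and the dual weight formula give $\|x_n\|_{\tilde{\varphi}} = \|e_n\|_\varphi \|f_n\|_2 = 1$, while $\iota_g(x_n) = \pi_\alpha(\alpha_g(e_n))\lambda(f_n^g)$ with $f_n^g(s) = f_n(g^{-1}sg)$, combined with $\alpha$-invariance of $\varphi$, yields after a short computation
\[
\tilde{\varphi}(x_n^*\iota_g(x_n)) = \varphi(e_n \alpha_g(e_n))\cdot \langle f_n^g, f_n \rangle_{L^2(G)}.
\]
The first factor tends to $1$ uniformly on compacta by almost invariance of $(e_n)$, so the task reduces to choosing $(f_n)$ with $\|f_n^g - f_n\|_2 \to 0$ uniformly on compacta.

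The main obstacle lies in this last step: it amounts to producing unit vectors in $L^2(G)$ almost invariant under the conjugation representation $c_g(f)(s) = f(g^{-1}sg)$. This is a subtle property, equivalent to the weak containment $1_G \prec c$. I expect the proof to handle it either by a direct geometric construction of such $(f_n)$ (for instance, normalized indicators of large compact sets whose symmetric difference with their conjugates is small relative to their own measure), or by replacing the tensor-product form of $x_n$ by a more intricate element of $\mathfrak{n}_{\tilde{\varphi}}$ so that the almost invariance of $(e_n)$ in $N$ absorbs the conjugation discrepancy.
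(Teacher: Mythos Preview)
Your $C_0$ argument runs parallel to the paper's: the paper also works with compactly supported $x,y\in K(G,N)$, expands via the convolution and dual--weight formulas, and invokes dominated convergence together with the $C_0$ property of $\alpha$ on $N$. The only cosmetic difference is that the paper takes general $x,y\in K(G,N)$ rather than elementary tensors $\pi_\alpha(a)\lambda(f)$.

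Where your proposal and the paper diverge is exactly at the obstacle you isolate. For the almost invariant sequence the paper takes $x_n=e_nf$ with a \emph{single fixed} $f\in C_c(G)$ and writes
\[
\|\iota_g(x_n)-x_n\|_{\tilde\varphi}^{\,2}
=\int_G \|\alpha_g(x_n(u))-x_n(u)\|_\varphi^{\,2}\,du
=\|\alpha_g(e_n)-e_n\|_\varphi^{\,2}\,\|f\|_2^{\,2},
\]
from which almost invariance follows at once. That is, the paper's computation treats $(\iota_g(x_n))(u)$ as $\alpha_g(e_n)f(u)$, \emph{without} the conjugated argument $f(g^{-1}ug)$ that your (correct) identity $\iota_g\bigl(\pi_\alpha(e_n)\lambda(f)\bigr)=\pi_\alpha(\alpha_g(e_n))\lambda(f^g)$ forces; the same omission already appears in the paper's $C_0$ computation, where $(\iota_g(y))(u)$ is effectively taken to be $\alpha_g(y(u))$ rather than $\alpha_g(y(g^{-1}ug))$. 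So the paper never confronts the conjugation--representation issue you raise: your factorisation
\[
\tilde\varphi\bigl(x_n^*\iota_g(x_n)\bigr)=\varphi\bigl(e_n\alpha_g(e_n)\bigr)\,\langle f_n^{\,g},f_n\rangle_{L^2(G)}
\]
is accurate, and the difficulty you flag --- that one needs $1_G$ weakly contained in the conjugation representation on $L^2(G)$, or some device that circumvents it --- is a genuine point that the paper's argument, as written, does not address.
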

\begin{remark} Since $G$ is unimodular then $\Delta\equiv 1.$ However in order to emphasize where the hypothesis of unimodularity is necessary in the following proof, it was decided to start this proof as if we consider any group l.c.s.c. 
\end{remark} 
\begin{proof}
First we prove that the action $\iota$ of $G$ on $N\rtimes_{\alpha} G$ is $C_0$. Let $x,y\in K(G,N)\cap \mathfrak{n}_{\tilde{\varphi}}$ and suppose without lost of generality that $\|x\|_{\tilde{\varphi}}=\|y\|_{\tilde{\varphi}}=1$. Then, recalling that $G$ is first assumed to be a general l.c.s.c. group, consider:
\begin{align*}
\tilde{\varphi}((x+\iota_g(y))^*(x+\iota_g(y)))&=\varphi\left((x+\iota_g(y))^*\times (x+\iota_g(y))(e)\right)\\
&=\varphi \left(\int (x+\iota_g(y))^*(s)\alpha_s((x+\iota_g(y))(s^{-1}))ds\right)\\
&=\varphi \left(\int \Delta(s)^{-1}\alpha_s((x+\iota_g(y))(s^{-1})^*)\alpha_s((x+\iota_g(y))(s^{-1}))ds\right)\\
\end{align*}
Hence we have that it is equal to :
$$\varphi \left(\int \Delta(s)^{-1}\alpha_s(x(s^{-1})^*+\Delta(g)^{-1}\lambda(g)y(s^{-1})^*)\lambda(g)^*)\alpha_s(x(s^{-1})+\Delta(g)^{-1}\lambda(g)y(s^{-1})\lambda(g)^*)ds\right).$$
Using the covariance relation and making the change of variables $u=s^{-1}$ we obtain :
$$\varphi \left(\int \left[ \alpha_{u^{-1}}(x(u)^*)+\Delta(g)^{-1}\alpha_{u^{-1}g}(y(u)^*)\right]\left[\alpha_{u^{-1}}(x(u))+\Delta(g)^{-1} \alpha_{u^{-1}g}(y(s))\right] du \right).$$
As $\varphi$ is normal, then we have :
\begin{align*}
\int \|x(u)+\Delta(g)^{-1}\alpha_{g}(y(u))\|_{\varphi}^2 du&=\int \|x(u)\|_{\varphi}^2 + 2 \text{Re}\left(\langle x(u),\Delta(g)^{-1}\alpha_g(y(u))\rangle_{\varphi}\right) + \Delta(g)^{-1}\|y(u)\|_{\varphi}^2 du.
\end{align*}
Now when $G$ is unimodular then $\Delta=1$ and therefore:
$$\Delta(g^{-1})\int\|y(u)\|_{\varphi}^2du=\tilde{\varphi}(y^*y).$$
So we have that :
$$\tilde{\varphi}((x+\iota_g(y))^*(x+\iota_g(y))=\tilde{\varphi}(x^*x)+\tilde{\varphi}(y^*y)+2\Delta(g)^{-1}\text{Re}\left( \int \varphi(x(u)^*\alpha_g(y(u))) du\right).$$
Using Cauchy-Schwarz inequality we have that :
$$\langle x(u), \alpha_g(y(u))\rangle_{\varphi}\le \|x(u)\|_{\varphi}\|y(u)\|_{\varphi}\le \frac{1}{2}(\|x(u)\|_{\varphi}^2+\|y(u)\|_{\varphi}^2).$$ 
As $x,y\in \mathfrak{n}_{\tilde{\varphi}}$, then their respective functions are square integrable ( this is the analogue of lemma \ref{summable} for locally compact groups). Hence we can use the dominated convergence theorem and the fact that $x,y$ have compact supports to conclude that :
$$\lim_{g\rightarrow \infty} 2\Delta(g)^{-1}\text{Re}\left( \int \varphi(x(u)^*\alpha_g(y(u))) du\right)=0,$$
as $\Delta(g)=1$ for all $g\in G$. It follows that $$\lim_{g\rightarrow \infty} \tilde{\varphi}(x\iota_g(y))=0\quad \forall x,y\in K(G,N),$$
which proves that $\iota$ is $C_0$ for $\tilde{\varphi}$.\\
Now by assumption  there exists an almost invariant sequence $(e_k)_{k\ge 1}$ for $\alpha$ and $\varphi$. Consider a continuous function $f:G\to \mathbb{C}$ with compact support and take 
$$x_n=e_nf.$$
Then we have :
\begin{align*}
\|\iota_g(x_n)-x_n\|_{\tilde{\varphi}}^2&=\int \|\alpha_g(x_n(u))-x_n(u)\|_{\varphi}^2du\\
&=\int \|\alpha_g(e_n)-e_n\|_{\varphi}^2 |f(u)|^2 du.
\end{align*}
Hence for each compact $K\subset G$ we have : $$\sup_{g\in K}\|\iota_g(x_n)-x_n\|_{\tilde{\varphi}}^2\xrightarrow{n\rightarrow \infty} 0.$$
\end{proof}

\subsection{Actions on tensor products}

\begin{proposition}\label{prop:tensor}
	Let $G$ be an l.c.s.c.\ group with the Haagerup property and let $N$ and $M$ be von Neumann algebras with faithful, normal, semi-finite weights $\varphi_N$ and $\varphi_M$ such that $G$ has a $C_0$-action on $N$ with an almost invariant sequence and $G$ has an action on $M$ that also has an almost invariant sequence and that extends to a unitary representation on $\LL^2(M,\varphi_M)$.
	Then $G$ has a $C_0$-action on $N\overline{\otimes} M$ with an almost invariant sequence.
\end{proposition}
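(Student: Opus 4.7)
The plan is to put on $N\overline{\otimes} M$ the diagonal action $\gamma_g := \alpha_g \otimes \beta_g$, together with the tensor product weight $\tilde{\varphi} := \varphi_N \otimes \varphi_M$, and then verify each of the clauses of \cref{def:C0} by reducing them to the corresponding statements on the two factors. The natural identification $\LL^2(N\overline{\otimes} M, \tilde{\varphi}) \cong \LL^2(N,\varphi_N) \overline{\otimes} \LL^2(M,\varphi_M)$ realises the unitary extension of $\gamma$ as $\pi_N \otimes \pi_M$, where $\pi_N$ and $\pi_M$ are the unitary extensions of $\alpha$ and $\beta$ given by \cref{representation}. Invariance of $\tilde{\varphi}$, as well as the measurability assumption needed to apply \cref{representation} on the tensor product, both follow by considering first elementary tensors $x \otimes y \in \mathfrak{n}_{\varphi_N} \odot \mathfrak{n}_{\varphi_M}$ and then passing to the $\LL^2$-closure.

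For the $C_0$ condition, I first handle elementary tensors: given $x_i \in \mathfrak{n}_{\varphi_N}$ and $y_i \in \mathfrak{n}_{\varphi_M}$ for $i=1,2$, the matrix coefficient factorises as
\begin{equation*}
\tilde{\varphi}\bigl((x_1 \otimes y_1)^* \gamma_g(x_2 \otimes y_2)\bigr) = \varphi_N(x_1^* \alpha_g(x_2)) \cdot \varphi_M(y_1^* \beta_g(y_2)),
\end{equation*}
where the first factor tends to $0$ as $g \to \infty$ because $\alpha$ is $C_0$, while the second factor is bounded by $\|y_1\|_{\varphi_M}\|y_2\|_{\varphi_M}$ via Cauchy--Schwarz. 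For arbitrary $T_1, T_2 \in \mathfrak{n}_{\tilde{\varphi}}$, I would approximate each by a finite linear combination of elementary tensors in the $\LL^2$-norm (using that $\mathfrak{n}_{\varphi_N}\odot \mathfrak{n}_{\varphi_M}$ is dense in $\LL^2(N\overline{\otimes} M,\tilde{\varphi})$) and then apply the standard $\varepsilon/3$ argument exactly as in the last part of the proof of \cref{operatoralgebra}: the cross terms are controlled by Cauchy--Schwarz on the tensor product Hilbert space, and the main term is a finite sum of elementary-tensor matrix coefficients to which the previous paragraph applies.

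For the almost invariant sequence, let $(e_k) \subset \mathfrak{n}_{\varphi_N}$ be almost invariant for $\alpha$ and $\varphi_N$, and $(f_k) \subset \mathfrak{n}_{\varphi_M}$ be almost invariant for $\beta$ and $\varphi_M$. Set $p_k := e_k \otimes f_k$. Then $p_k$ is a projection in $N \overline{\otimes} M$ with $\|p_k\|_{\tilde{\varphi}}^2 = \varphi_N(e_k)\varphi_M(f_k) = 1$, and the triangle inequality combined with the fact that $\pi_N \otimes \pi_M$ is unitary gives
\begin{equation*}
\|\gamma_g(p_k)-p_k\|_{\tilde{\varphi}} \le \|\alpha_g(e_k)-e_k\|_{\varphi_N} + \|\beta_g(f_k)-f_k\|_{\varphi_M},
\end{equation*}
which tends to $0$ uniformly on compacts by assumption on both sequences.

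I expect the only non-obvious step to be the approximation argument in the $C_0$ verification, since one must ensure that a general element of $\mathfrak{n}_{\tilde{\varphi}}$ can indeed be approximated in $\LL^2$-norm by finite sums of elementary tensors lying in $\mathfrak{n}_{\varphi_N}\odot \mathfrak{n}_{\varphi_M}$; this is a standard fact about tensor products of semifinite weights but deserves to be stated explicitly. Everything else is a routine translation of the factor-wise hypotheses through the tensor-product identification.
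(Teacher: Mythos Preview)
Your proposal is correct and follows the same strategy as the paper: diagonal action $\gamma=\alpha\otimes\beta$, tensor weight $\varphi_N\otimes\varphi_M$, identification $\LL^2(N\overline{\otimes}M)\cong \LL^2(N)\overline{\otimes}\LL^2(M)$, and reduction to the factor-wise hypotheses. The only difference is one of packaging. The paper dispatches the $C_0$ property in one line by invoking the general fact that the tensor product of a $C_0$ unitary representation with \emph{any} unitary representation is again $C_0$, and similarly for almost invariant vectors; you instead unfold these facts by hand (elementary tensors, Cauchy--Schwarz, $\varepsilon/3$ approximation, explicit $p_k=e_k\otimes f_k$). Your version has the minor advantage of making the almost invariant sequence visibly a sequence of \emph{projections}, as required by \cref{def:C0}, whereas the paper's appeal to ``tensor of representations with almost invariant vectors has almost invariant vectors'' leaves that point implicit.
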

\begin{proof}
	Let $\alpha$ be the action of $G$ on $N$ and $\beta$ be the action of $G$ on $M$. Take $\varphi=\varphi_N\otimes \varphi_M$. By Definition 4.2 and Proposition 4.3 of \cite{takesaki2013theory}, we know that $\varphi$ is a weight on $N\overline{\otimes} M$.
	We show that the action $\gamma=\alpha\otimes\beta$ of $G$ on $N\overline{\otimes}  M$ is $C_0$ and has an almost invariant sequence.
	\\
	First, note that $\LL^2(N\overline{\otimes}  M, \varphi_N\otimes \varphi_M) = \LL^2(N,\varphi_N)\overline{\otimes} \LL^2(M,\varphi_M)$. As both the extension of $\alpha$ and $\beta$ to $\LL^2(N,\varphi_N)$ and $\LL^2(M,\varphi_M)$ respectively are unitary representation, we have that $\gamma$ extends to a unitary representation as well. As $\alpha$ is $C_0$ for $\varphi_N$, then by proposition \ref{DJ} its extension is $C_0$. Since the tensor product of a $C_0$-representation with any representation is $C_0$, then $\gamma$ is $C_0$ for $\varphi$.
	\\
	Finally, by proposition \ref{DJ} both the extension of $\alpha$ and $\beta$ have almost invariant vectors. Since the tensor product of representations with almost invariant vectors has almost invariant vectors, then $\gamma$ has an almost invariant sequence.
\end{proof}

With this result we can construct many new examples of von Neumann algebras that have a $C_0$-action of $G$ with an almost invariant sequence.
From \cref{cor:L_infty} and \cref{operatoralgebra} we know that $G$ has such an action on a $\LL^\infty(\Omega)$ and a $\B(\mathcal{H})$. We also have from \cref{noncommutative jol} an action of $G$ on the hyper finite \rom{2}$_1$ factor $R$ with its trace $\tau$ and an action of $G$ on Powers factor $R_\lambda$ with the Powers state $\varphi_\lambda$. Now due to \cref{prop:tensor}, we know that $G$ has a $C_0$-action with an almost invariant sequence on $\LL^\infty(\Omega)\overline{\otimes} R$, $\B(\mathcal{H})\overline{\otimes} R$, $\LL^\infty(\Omega)\overline{\otimes} R_\lambda$ and $\B(\mathcal{H})\overline{\otimes} R_\lambda$.
Hence, we have an example of a type \rom{2}$_1$ von Neumann algebra, a \rom{2}$_\infty$ factor and type \rom{3} factor.
\section{Discussion on Ergodicity}\label{sec3}
Theorem \ref{THM DJZ} characterizes the Haagerup property of $G$ in terms of actions on infinite measure spaces. In this section we investigate if such actions can always be taken ergodic. First we will give an another proof of this theorem using a skew product action introduce in section 2.5 in \cite{amine1}. Via tools of \cite{amine1} and \cite{amine2} this new approach will allow us to assume that such dynamical system is ergodic. Secondly we give non trivial examples of such dynamical systems for groups acting properly on trees. In particular, it will give an example of a measure space with a $C_0$-action for the free group on two generators.\\
We begin by a brief recall of the Gaussian process. We refer to \cite{amine2} for more information. Let $\mathcal{H}$ be a real separable Hilbert space. Then it is already known that there exists a standard probability space $(\hat{\mathcal{H}},\mu)$ and a linear isometry $\iota : \mathcal{H}\hookrightarrow L^2(\hat{\mathcal{H}},\mu)$ such that :
\begin{enumerate}
\item[1)] For all $\xi \in \mathcal{H}$, $\iota(\xi)$ is a centered Gaussian random variable of variance $\|\xi\|^2$;
\item[2)] The family of random variables $(\iota(\xi))_{\xi \in \mathcal{H}}$ generates the $\sigma$-algebra of measurable subsets of $(\hat{\mathcal{H}},\mu)$.
\end{enumerate}
Moreover this random process is unique in the following sense. 
Suppose there exists another triple $(X,\nu, \iota')$ satisfying $(1)$ and $(2)$, then there exists a measurable bijection $\varphi : (\hat{\mathcal{H}},\mu)\to (X,\nu)$ such that $\varphi_*\mu=\nu$ and almost everywhere $$\iota'(\xi)\circ \varphi=\iota(\xi),$$
for all $\xi\in \mathcal{H}$. We use the same notation as \cite{amine1} and denote this unique Gaussian process $(\hat{\xi})_{\xi \in \mathcal{H}}$. Note that from the uniqueness of the Gaussian process, it follows that for an orthogonal transformation $U\in \mathcal{O}(\mathcal{H})$, there exists a measure preserving automorphism $\hat{U}$ of $(\hat{\mathcal{H}},\mu)$ such that $$\widehat{U\xi}=\hat{\xi}\circ \hat{U}^{-1}.$$
Now defining a new measure on $\hat{\mathcal{H}}$ by the formula $d\mu_{\eta}=\exp(-\frac{1}{2}\|\eta\|^2+\hat{\eta})d\mu$ for a fixed vector $\eta\in \mathcal{H}$, we can see that the random variable $$\hat{\xi}-\langle \xi,\eta\rangle$$ has a normal distribution $\mathcal{N}(0,\|\xi\|^2)$ with respect to $\mu_\eta$. Thus using uniqueness there exists a nonsingular automorphism $\hat{T}_{\eta}$ of $(\hat{\mathcal{H}},\mu)$ such that $$\hat{\xi}\circ \hat{T}_{\eta}^{-1}=\hat{\xi}-\langle \xi,\eta\rangle,$$ for all $\xi\in \mathcal{H}$. We will use the notation $\hat{T}_{\eta}(\omega)=\omega+\eta$ so that the intuitive formula $\langle \omega + \eta, \xi \rangle=\langle\omega, \xi\rangle + \langle \eta, \xi \rangle $ holds, where $\langle \omega, \xi\rangle $ denotes $\hat{\xi}(\omega)$. Then a simple computation shows that for $V\in \mathcal{O}(\mathcal{H})$ and $\eta\in \mathcal{H}$ we have $$\hat{V}(\omega + \eta)=\hat{V}(\omega)+V(\eta).$$ Therefore we get a morphism from $\text{Isom}(\mathcal{H})$ to $\text{Aut}(\hat{\mathcal{H}},[\mu])$. In particular, for every action  $\alpha:G\curvearrowright \mathcal{H}$ by affine isometries, we obtain a Gaussian action $\hat{\alpha}:G\curvearrowright (\hat{\mathcal{H}},\mu)$.\\
Now let $\pi: G\to \mathcal{O}(\mathcal{H})$ be an orthogonal representation of $G$ with the Haagerup property. Let $c\in Z^1(\pi,\mathcal{H})$ be a $1$-cocycle which is proper and take $\alpha: G\curvearrowright \mathcal{H}$ the associated proper affine isometric action. Consider the measurable space $\Omega=\hat{\mathcal{H}}\times \mathbb{R}$ endowed with the measure $\nu:=\mu \otimes \lambda$, where $\lambda$ denotes the Lebesgue measure. Then we define the skew product infinite measure preserving action $\beta: G\curvearrowright (\Omega,\nu)$ by the formula:
\begin{equation}\label{beta}
\beta_g(\omega,t)=(\hat{\pi}(g)w, t+\langle \omega, c(g^{-1})\rangle).
\end{equation}
First we prove that the Koopman representation associated to $\beta$ is of class $C_0$ using the following proposition taken from \cite{amine2} page $54$.
\begin{proposition}\label{Koopman}
Let $\sigma:G\curvearrowright X$ a nonsingular action on a space $X$ and $\pi:G\curvearrowright L^2(X)$ the associated Koopman representation. Then the following are equivalent: 
\begin{enumerate}
\item[(i)] $\lim_{g\to \infty} \langle \pi(g)\xi,\xi\rangle=0$ for some faithful $\xi \in L^2(X)^{+}$.
\item[(ii)] $\pi$ is mixing.
\end{enumerate}
\end{proposition}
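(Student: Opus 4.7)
The implication (ii) $\Rightarrow$ (i) is immediate: pick any faithful vector in $L^2(X)^+$ (for instance, one obtained from a probability measure equivalent to $\mu$ when $\mu$ is $\sigma$-finite) and apply the mixing hypothesis to the pair $(\xi,\xi)$. The substance lies in (i) $\Rightarrow$ (ii), for which I would exploit the explicit Koopman formula
\[
(\pi(g)\eta)(x) = \eta(\sigma_{g}^{-1}x)\,\rho_g(x)^{1/2}, \qquad \rho_g(x) = \frac{d(\sigma_g)_*\mu}{d\mu}(x),
\]
together with the fact that, because $\xi$ is faithful and positive, the subspace $L^{\infty}(X)\cdot\xi$ is dense in $L^2(X)$.

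The pivotal step is the following pointwise estimate: for every $f,h\in L^{\infty}(X)$,
\[
\bigl|\langle \pi(g)(f\xi),h\xi\rangle\bigr|
= \Bigl|\int \overline{h(x)}\,f(\sigma_g^{-1}x)\,\xi(x)\,\xi(\sigma_g^{-1}x)\,\rho_g(x)^{1/2}\,d\mu(x)\Bigr|
\le \|f\|_{\infty}\|h\|_{\infty}\,\langle\pi(g)\xi,\xi\rangle,
\]
since the integrand $\xi(x)\xi(\sigma_g^{-1}x)\rho_g(x)^{1/2}$ is nonnegative. Hypothesis (i) therefore yields $\langle \pi(g)(f\xi),h\xi\rangle \to 0$ as $g\to\infty$ for every pair of bounded measurable functions $f,h$.

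To finish, I would verify the density claim. Given $\eta\in L^2(X)$, set $A_n=\{x:\xi(x)\ge 1/n,\ |\eta(x)|\le n\}$ and $f_n=(\eta/\xi)\mathbf{1}_{A_n}\in L^{\infty}(X)$; then $f_n\xi=\eta\mathbf{1}_{A_n}\to\eta$ in $L^2(X)$ by dominated convergence, using $\xi>0$ a.e.\ to get $\mathbf{1}_{A_n}\to 1$ almost everywhere. Combined with the unitarity of $\pi(g)$ (which gives the uniform bound $\|\pi(g)\|=1$), a standard three-epsilon argument extends the vanishing of matrix coefficients from the dense set $L^{\infty}(X)\cdot\xi\times L^{\infty}(X)\cdot\xi$ to all of $L^2(X)\times L^2(X)$, proving that $\pi$ is mixing. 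I do not anticipate a serious obstacle: the only place where faithfulness of $\xi$ is genuinely used is in the density step, and the only place where positivity is used is to drop the absolute value in the key estimate.
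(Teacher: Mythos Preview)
Your argument is correct. Note, however, that the paper does not prove this proposition at all: it is quoted from \cite{amine2}, page~54, and used as a black box to establish the $C_0$ property of the Koopman representation of the skew product action. There is therefore no in-paper proof to compare against.

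For the record, your approach is the standard one for this type of statement: the positivity of the Koopman operator on $L^2(X)^+$ gives the domination $|\langle\pi(g)(f\xi),h\xi\rangle|\le\|f\|_\infty\|h\|_\infty\,\langle\pi(g)\xi,\xi\rangle$, and faithfulness of $\xi$ yields density of $L^\infty(X)\cdot\xi$. Both steps are carried out cleanly, and the three-epsilon extension to all of $L^2(X)$ is routine.
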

Let $(g_n)_{n\ge 1}$ be a sequence in $G$ which tends to infinity. We will denote $X_n$ the random variable $\widehat{c(g_n)}$ with density $$\frac{1}{\|c(g_n)\|\sqrt{2\pi}}\exp\left(-\frac{x^2}{2\|c(g_n)\|^2}\right).$$
Let $\xi=1\otimes h\in L^2(\Omega,\nu)$, where $h(t)=\frac{1}{1+t^2}.$
Then we compute:
\begin{align*}
\lim_{n\to \infty}\langle \pi_{\Omega}(g_n)\xi,  \xi \rangle,
\end{align*}
where $\pi_{\Omega}$ is the Koopman representation associated to $\beta$. As $\nu$ is $G$-invariant, we have that $\pi_{\Omega}$ is simply the permutation representation on $L^2(\Omega)$. Hence:
\begin{align*}
\langle \pi_{\Omega}(g_n)\xi, \xi\rangle &=\int_{\Omega} (\pi_{\Omega}(g_n)\xi)(u) \xi(u) d\nu(u)\\
&=\int_{\hat{H}\times \R} \xi(\beta_{g_n^{-1}}(\omega, t))\xi(\omega,t) d(\mu \otimes \lambda)(\omega,t)\\
&=\int_{\hat{H}\times \R} \frac{1}{1+(t+X_n(\omega))^2}\frac{1}{1+t^2}d(\mu \otimes \lambda)(\omega,t).
\end{align*}
Using Fubini-Tonelli, we get:
\begin{align*}
\int_{\R}\frac{1}{1+t^2}\left(\int_{\hat{H}}\frac{1}{1+(t+X_n(\omega))^2} d\mu(\omega) \right)dt.
\end{align*}
Then we obtain:
\begin{align*}
\int_{\R} \frac{1}{1+t^2}\left(\frac{1}{\|c(g_n)\|\sqrt{2\pi}}\int_{\R} \frac{1}{1+(t+x)^2} e^{\frac{-x^2}{2\|c(g_n)\|^2}}  dx \right) dt.
\end{align*}
Applying again Fubini-Tonelli, this integral becomes : 
$$\frac{1}{\|c(g_n)\|\sqrt{2\pi}}\int_{\R\times \R} \frac{1}{1+t^2}\frac{1}{1+(t+x)^2} e^{\frac{-x^2}{2\|c(g_n)\|^2}}  d\lambda(x,t).$$
Now writing for all $n$ :
$$f_n(x,t)= \frac{1}{\|c(g_n)\|\sqrt{2\pi}}\frac{1}{1+t^2}\frac{1}{1+(t+x)^2} e^{\frac{-x^2}{2\|c(g_n)\|^2}},$$
we notice that $|f_n(x,t)|\le\frac{C}{(1+t^2)(1+(x+t)^2)} $ for every $(x,t)\in \R^2$ and for all $n$ large enough, where $C>0$. Indeed as the cocycle is proper then for $N$ large enough we get :$$\|c(g_n)\|\ge A> 0\; \forall n\ge N.$$
Moreover the sequence of functions $(f_n)$ converges pointwise to $0$, since $c$ is proper. Using the dominated convergence theorem, we deduce:
$$\lim_{n\to \infty} \langle \pi_{\Omega}(g_n)\xi,\xi\rangle=0.$$
As $\xi$ is a separating vector, we conclude with proposition \ref{Koopman} to obtain:
\begin{proposition}
Let $G$ be a locally compact second countable group which is a-$(T)$-menable. Then $\pi_{\Omega}$, the Koopman representation associated to $\beta$, is $C_0$.
\end{proposition}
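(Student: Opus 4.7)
The strategy is to invoke Proposition \ref{Koopman}: since $\beta$ preserves the infinite measure $\nu = \mu \otimes \lambda$, the Koopman representation $\pi_\Omega$ coincides with the permutation representation, and it therefore suffices to exhibit a single faithful $\xi \in L^2(\Omega,\nu)^+$ whose matrix coefficient $g \mapsto \langle \pi_\Omega(g)\xi,\xi\rangle$ vanishes at infinity along every sequence $g_n \to \infty$. So the plan is to pick a convenient test vector and then compute that matrix coefficient explicitly, exploiting the skew-product structure in \eqref{beta}.

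A natural choice is the tensor product $\xi = 1 \otimes h$ with $h \in L^2(\R)^+$ taken to be the Cauchy kernel $h(t) = 1/(1+t^2)$; this $\xi$ is strictly positive everywhere on $\Omega$, hence faithful, and its polynomial decay will cohabit gracefully with the Gaussian density that enters the computation. Unfolding $\beta_{g_n^{-1}}$ and applying Fubini-Tonelli to integrate out $\omega \in \hat{\mathcal{H}}$ first, $\langle \pi_\Omega(g_n)\xi,\xi\rangle$ reduces to a double integral on $\R^2$ of the product of two Cauchy kernels (in $t$ and $t+x$) against the Gaussian density of the random variable $\widehat{c(g_n^{-1})}$, a centred Gaussian whose variance is $\|c(g_n^{-1})\|^2$.

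The final step is a dominated-convergence argument. Pointwise in $(x,t)$ the Gaussian density $\frac{1}{\|c(g_n)\|\sqrt{2\pi}}\exp\!\bigl(-x^2/(2\|c(g_n)\|^2)\bigr)$ tends to $0$ as $\|c(g_n)\| \to \infty$, which is ensured by properness of the cocycle $c$. I expect the main technical obstacle to be locating an integrable dominating function uniform in $n$: the prefactor $1/(\|c(g_n)\|\sqrt{2\pi})$ is only controlled once $\|c(g_n)\|$ is bounded away from $0$, but properness again delivers a uniform lower bound $\|c(g_n)\| \ge A > 0$ for all $n$ sufficiently large. With that in hand the integrand is dominated by $C/\bigl((1+t^2)(1+(x+t)^2)\bigr)$, which is integrable on $\R^2$, and dominated convergence yields $\langle \pi_\Omega(g_n)\xi,\xi\rangle \to 0$. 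Proposition \ref{Koopman} then upgrades this to mixing of $\pi_\Omega$, i.e.\ to $\pi_\Omega$ being of class $C_0$, completing the argument.
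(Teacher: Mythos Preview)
Your proposal is correct and follows essentially the same route as the paper: the same faithful test vector $\xi = 1\otimes h$ with $h(t)=1/(1+t^2)$, the same reduction via Fubini--Tonelli to a double integral involving the Gaussian density of $\widehat{c(g_n)}$, the same dominating function $C/\bigl((1+t^2)(1+(x+t)^2)\bigr)$ justified by a lower bound on $\|c(g_n)\|$ from properness, and the same appeal to Proposition~\ref{Koopman}. The only cosmetic discrepancy is that you write $\widehat{c(g_n^{-1})}$ where the paper has $\widehat{c(g_n)}$; since $\|c(g^{-1})\|=\|c(g)\|$ this is immaterial, but the correct random variable arising from $\beta_{g_n^{-1}}$ is indeed $\widehat{c(g_n)}$.
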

\begin{remark}
It is interesting to notice that, with such an action, the $C_0$-property of $\pi_{\Omega}$ depends uniquely on the $1$-cocycle, and not on properties of the p.m.p. action $\hat{\pi}$ and therefore intrinsically from the orthogonal representation $\pi$.
\end{remark}
Moreover, we have that $\pi_{\Omega}$ admits a sequence of almost invariant vectors. Indeed consider for all $n$ $$\xi_n=1\otimes \frac{1}{\sqrt{2n}}\textbf{1}_{[-n,n]}.$$
Then :
\begin{align*}
\langle \pi_{\Omega}(g)\xi_n,\xi_n\rangle &=\frac{1}{2n}\frac{1}{\|c(g)\|\sqrt{2\pi}}\int_{\R}e^{\frac{-x^2}{2\|c(g)\|^2}} \left(\int_{\R} \textbf{1}_{[-n,n]}(t)\textbf{1}_{[-n,n]}(t+x) dt\right)dx\\
&=\frac{1}{2n}\frac{1}{\|c(g)\|\sqrt{2\pi}}\int_{0}^{\infty}e^{\frac{-x^2}{2\|c(g)\|^2}}(2n-x)dx +\frac{1}{2n}\frac{1}{\|c(g)\|\sqrt{2\pi}}\int_{-\infty}^0 e^{\frac{-x^2}{2\|c(g)\|^2}}(2n+x)dx\\
&=\frac{2(n+\frac{\|c(g)\|}{\sqrt{2\pi}})}{2n}.
\end{align*}
Thus by continuity of the $1$-cocycle, we deduce that for each compact subset $K$ of $G$, we have:
$$\sup_{g\in K}\langle \pi_{\Omega}(g)\xi_n,\xi_n\rangle\xrightarrow{n\to \infty} 1.$$
\\
Finally the continuity of $\pi_{\Omega}$ is obtained using lemma $A.6.2$ page 309 of \cite{bekka2008kazhdan}.\\
We improve here \cref{THM DJZ} (the improvement is the last statement):  
\begin{theorem}\label{THM DJZ 2}
	Let $G$ be a locally compact, second countable group. Then it has the Haagerup property if and only if it admits an action by Borel automorphisms on a measure space $(\Omega,\mu)$ such that $\mu$ is $\sigma$-finite, $G$-invariant and satisfies the following two additional properties:
	\begin{enumerate}
		\item for all measurable subsets $A,B\subset \Omega$ such that $0<\mu(A),\mu(B)<\infty$, the function $g\mapsto \mu(gA\cap B)$ is $C_0$, i.e. $(\Omega,\mathcal{B}, \mu,G)$ is a $C_0$-dynamical system;
		\item There exists a sequence of unit vectors $(\xi_n)_{n\ge 1}\subset L^2(\Omega,\mu)^+$ such that for all compact subsets $K$ of $G$, one has 
		$$
\lim_{n\rightarrow \infty} \sup_{g\in K} \langle \pi_{\Omega}(g)\xi_n,\xi_n\rangle=1.
$$
	\end{enumerate}
	Moreover if $G$ is non-periodic, then $C_0$-dynamical system may be assumed to be ergodic. 
\end{theorem}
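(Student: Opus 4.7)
My plan is to use the skew product action $\beta: G \curvearrowright (\Omega,\nu) = (\hat{\mathcal{H}} \times \mathbb{R}, \mu \otimes \lambda)$ defined in equation (\ref{beta}), built from an orthogonal $C_0$-representation $\pi$ of $G$ and a proper $1$-cocycle $c \in Z^1(\pi,\mathcal{H})$ associated to the Haagerup property. The work already carried out in this section shows that $\beta$ satisfies conditions (1) and (2) of the theorem: the Koopman representation $\pi_\Omega$ is $C_0$ (via the Gaussian computation and properness of $c$) and admits the almost invariant unit vectors $\xi_n = 1 \otimes \frac{1}{\sqrt{2n}} \mathbf{1}_{[-n,n]}$. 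Since the equivalence of (1)+(2) with the Haagerup property is given by \cref{THM DJZ}, the only new content of \cref{THM DJZ 2} is the ``Moreover'' clause, namely that the dynamical system may be taken ergodic when $G$ is non-periodic.

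For this last step I would invoke the proposition stated in the introduction of the paper, which is the l.c.s.c.\ extension of Theorem 9.1.3 of \cite{amine1}. It asserts that if $\pi$ is $C_0$, $c$ is an evanescent $1$-cocycle that is not a coboundary, and $G$ is non-periodic, then $\beta$ is weakly mixing. All three hypotheses are in force here: $\pi$ was chosen $C_0$ by the Haagerup property, the cocycle $c$ is proper hence a fortiori evanescent, and $c$ cannot be a coboundary because coboundaries have bounded range while $c$ is proper. Weak mixing implies ergodicity, which gives the required ergodic $C_0$-dynamical system and concludes the proof.

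The main obstacle is really the proposition itself, and not the way we apply it: one must carry Marrakchi's discrete-group weak mixing argument over to the locally compact setting. The role of non-periodicity is to furnish a discrete subgroup isomorphic to $\mathbb{Z}$ inside $G$; restricting $\beta$ to this subgroup reduces the problem to the discrete case treated in \cite{amine1}, where the spectral/eigenvalue analysis of the Gaussian skew product can be performed. Weak mixing then transfers from the $\mathbb{Z}$-subaction to the whole of $G$, since any nontrivial finite-dimensional $G$-invariant subspace of $L^2(\Omega,\nu)$ would also be invariant under the $\mathbb{Z}$-action. The measurability and continuity ingredients specific to l.c.s.c.\ groups are handled by the functoriality of the Gaussian construction $U \mapsto \hat{U}$ and $\eta \mapsto \hat{T}_\eta$ recalled above.
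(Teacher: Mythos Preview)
Your overall strategy coincides with the paper's, but there are two genuine gaps. The first is the claim that ``$c$ is proper hence a fortiori evanescent'': this implication is false in general, even for $C_0$ representations. For instance, if $\sigma$ is an irreducible $C_0$ representation of $G$ with $H^1(G,\sigma)\neq 0$ and $b$ is a non-coboundary $\sigma$-cocycle, while $d$ is any proper cocycle for another $C_0$ representation $\rho$, then $c=b\oplus d$ is proper for $\pi=\sigma\oplus\rho$, yet every $\alpha$-invariant affine subspace has linear part containing $\mathcal H_\sigma$, so $\operatorname{supp}(\alpha)\supset\mathcal H_\sigma\neq\{0\}$. The paper does not derive evanescence from properness; instead it invokes Propositions~2.12 and~2.13 of \cite{amine2} (see the closing remark of the section) to obtain, for any Haagerup group, an evanescent affine isometric action with mixing linear part and no fixed point, and applies \cref{weaklymixing} to \emph{that} action.

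The second gap is in your sketch of \cref{weaklymixing} itself. The paper does not prove it by restricting $\beta$ to $\mathbb Z$ and pushing the discrete result of \cite{amine1} back up to $G$: your transfer step via finite-dimensional invariant subspaces does not match the definition of weak mixing used here (ergodicity of $\beta\otimes\rho$ for every ergodic pmp $G$-action $\rho$), because $\rho|_{\mathbb Z}$ need not be $\mathbb Z$-ergodic. The paper's argument is different: the cyclic subgroup $\Lambda\cong\mathbb Z$ is used only to establish \emph{recurrence} of one factor of the diagonal action (via conservativity, Theorem~9.1.2 of \cite{amine1}); this is combined with the mixing of the Gaussian factor $\hat K$ through \cref{7.13 Amine}, and finally the evanescent structure together with \cref{2.7 Amine} forces the fixed-point algebra to reduce to the scalars.
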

So it remains to prove ergodicity. Actually the proof will be in two steps. First we will prove proposition \ref{weaklymixing} which is an extension of theorem 9.1.3 of \cite{amine1} to the locally compact case. Then we will see that a non-compact group $G$ with the Haagerup property always satisfies the hypothesis of this proposition. In order to do that we begin by some definitions and recalls which also fix the notations.\\
In \cite{amine2} the authors introduce an important notion to study affine isometric actions.
\begin{definition}
Let $\alpha:G\curvearrowright \mathcal{H}$ be an affine isometric action. Then the support of $\alpha$ is the linear subspace of $\mathcal{H}^0$ defined by 
$$\text{supp}(\alpha)=\bigcap_{\mathcal{K}\in E} \mathcal{K}^0$$
where $E$ is the set of all non-empty $\alpha$-invariant affine subspaces of $\mathcal{H}$. If $\text{supp}(\alpha)=\left\{0\right\}$, we say that $\alpha$ is evanescent.
\end{definition}
By proposition $2.10$ of  \cite{amine2}, we say that a $1$-cocycle $c\in Z^1(\pi,\mathcal{H})$ is evanescent if there exists a decreasing sequence of closed $\pi(G)$-invariant subspaces $H_n$ such that $\bigcap_n H_n=\left\{0\right\}$ and such that for every fixed $n$, the $1$-cocycle $c$ is cohomologous to a $1$-cocycle taking values in $H_n$.\\
Now from section $2.7$ of \cite{amine1} we recall a important result when we are interested to show ergodicity. Let $K\subset H$ be a closed subspace and $\widehat{\pi_{K}}:\hat{H}\to \hat{K}$ the natural measure preserving factor map. For every $\xi \in \mathcal{H}$, we denote by $\mathcal{M}(\xi+K)\subset L^{\infty}(\hat{H}\times \R)$ the von Neumann subalgebra of functions of the form
$$(\omega,t)\mapsto F(\widehat{\pi_{K}}(\omega), t+\langle \omega, \xi\rangle), \quad F\in L^{\infty}(\hat{K}\times \R).$$
Then :
\begin{proposition}{(Proposition 2.7, \cite{amine2})}\label{2.7 Amine}
Let $(\xi_i+H_i)_{i\in I}$ by a family of closed affine subspaces of a real Hilbert space $H$. Write $K=\bigcap_{i\in I} H_i$, $M=\bigcap_{i\in I}(\xi_i +H_i)$, $A=\bigcap_{i\in I} \mathcal{M}(\xi_i+H_i)$.
\begin{enumerate}
\item If $M=\emptyset$, then $A=L^{\infty}(\hat{K})\otimes 1.$
\item If $\eta \in M$, then $M=\eta +K$ and $A=\mathcal{M}(\eta +K)$.
\end{enumerate}
\end{proposition}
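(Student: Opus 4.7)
The plan is to split along the stated dichotomy and to exploit two recurring ingredients: the Gaussian $\sigma$-algebra identity $\bigcap_{i\in I}\sigma(\widehat{\pi_{H_i}})=\sigma(\widehat{\pi_K})$ on $\hat{H}$, and the rigidity of coherent states $e^{i\lambda\hat{\xi}}$, whose Gaussian expectation equals $e^{-\lambda^{2}\|\xi\|^{2}/2}$ and which form a total family in $L^{2}(\hat{H})$.

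For part $(2)$, fix $\eta\in M$. Then $\xi_i+H_i=\eta+H_i$ for every $i$, so $M=\bigcap_i(\eta+H_i)=\eta+K$ and $\mathcal{M}(\xi_i+H_i)=\mathcal{M}(\eta+H_i)$. The inclusion $\mathcal{M}(\eta+K)\subset A$ is immediate from $K\subset H_i$. For the reverse inclusion, apply the measure-preserving bijection $(\omega,t)\mapsto(\omega,s)$ with $s:=t+\hat{\eta}(\omega)$; in the new coordinates, membership in $\mathcal{M}(\eta+H_i)$ becomes $\tilde{f}(\omega,s)=F_i(\widehat{\pi_{H_i}}(\omega),s)$, i.e.\ measurability with respect to $\sigma(\widehat{\pi_{H_i}})\otimes\mathcal{B}(\R)$. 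Slicing in $s$ and invoking the $\sigma$-algebra identity yields measurability of $\tilde{f}$ with respect to $\sigma(\widehat{\pi_K})\otimes\mathcal{B}(\R)$, which, after undoing the change of variable, reads $f\in\mathcal{M}(\eta+K)$.

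For part $(1)$, decompose $A$ spectrally under the measure-preserving $\R$-action $S_h(\omega,t)=(\omega,t-h)$, which preserves every $\mathcal{M}(\xi_i+H_i)$. The fixed subspace $A^{\R}$ equals $\bigcap_i L^{\infty}(\hat{H_i})\otimes 1=L^{\infty}(\hat{K})\otimes 1$ by the $\sigma$-algebra identity, so it suffices to show that for each $\lambda\neq 0$ any $f(\omega,t)=e^{i\lambda t}g(\omega)$ lying in $A$ must vanish; equivalently, $g\cdot e^{-i\lambda\widehat{\xi_i}}$ is $\sigma(\widehat{\pi_{H_i}})$-measurable for every $i$ and one must conclude $g=0$. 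In the two-subspace instance with $M=\emptyset$, the vector $\zeta:=\xi_2-\xi_1\notin H_1+H_2$ splits as $\zeta=u+v$ with $u\in H_1+H_2$ and $0\neq v\in(H_1+H_2)^{\perp}$; writing $g_i=g\cdot e^{-i\lambda\widehat{\xi_i}}$, the identity $g_1=g_2 e^{i\lambda\hat{u}}e^{i\lambda\hat{v}}$, in which $g_1$ and $g_2 e^{i\lambda\hat{u}}$ are $\sigma(\widehat{\pi_{H_1+H_2}})$-measurable while $e^{i\lambda\hat{v}}$ is independent of that $\sigma$-algebra, gives upon conditioning the equation $g_2 e^{i\lambda\hat{u}}\bigl(e^{i\lambda\hat{v}}-e^{-\lambda^{2}\|v\|^{2}/2}\bigr)=0$; since a circle-valued random variable cannot a.s.\ equal a number in $(0,1)$, the second factor is pointwise nonzero and $g=0$. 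A short induction on the number of subspaces, invoking part $(2)$ to merge subfamilies whose partial intersection is non-empty, handles any finite family.

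The main obstacle is the genuinely infinite case where every finite intersection is non-empty while $M=\emptyset$. Reducing to a countable subfamily by separability (which is a Lindelöf argument on the open cover $\{H\setminus(\xi_i+H_i)\}_i$) and applying part $(2)$, write $\bigcap_{i\le n}(\xi_i+H_i)=\eta_n+K_n$ with $\eta_n\in K_n^{\perp}$. A preliminary lemma is $\|\eta_n\|\to\infty$: the nesting together with the Pythagorean identity $\|\eta_{n+1}\|^{2}=\|\eta_n\|^{2}+\|\eta_{n+1}-\eta_n\|^{2}$ forces monotonicity, and boundedness would produce a weak subsequential limit lying in every (weakly closed) $\eta_n+K_n$, contradicting $M=\emptyset$. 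Given $f=e^{i\lambda t}g\in A$ with $\lambda\neq 0$, write $g=G_n e^{i\lambda\widehat{\eta_n}}$ with $G_n$ $\sigma(\widehat{\pi_{K_n}})$-measurable and $\|G_n\|_2=\|g\|_2$, and test against any coherent state $e^{i\mu\hat{v}}$; decomposing $v=v'+v''$ along $K_n\oplus K_n^{\perp}$, independence gives
\[
\E\bigl[g\,e^{i\mu\hat{v}}\bigr]=\E\bigl[G_n\,e^{i\mu\widehat{v'}}\bigr]\cdot e^{-\|\lambda\eta_n+\mu v''\|^{2}/2}.
\]
The first factor is bounded uniformly in $n$ by $\|g\|_2$, while $\|\lambda\eta_n+\mu v''\|\to\infty$ forces the second to vanish; since the left-hand side does not depend on $n$, it must equal zero for every coherent state. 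Totality of coherent states in $L^{2}(\hat{H})$ then forces $g=0$, completing the proof.
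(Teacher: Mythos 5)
This proposition is imported verbatim from \cite{amine2} (it is their Proposition 2.7) and the present paper gives no proof of it, so there is nothing in-house to compare you with; I am judging your argument on its own. Your part (2), the reverse inclusion via the change of variable $s=t+\hat{\eta}(\omega)$, the $\|\eta_n\|\to\infty$ lemma, and the final characteristic-function estimate $\mathbb{E}\bigl[g\,e^{i\mu\hat{v}}\bigr]=\mathbb{E}\bigl[G_n e^{i\mu\widehat{v'}}\bigr]e^{-\|\lambda\eta_n+\mu v''\|^{2}/2}$ are all sound (modulo citing the Gaussian lattice identity $\bigcap_i\sigma(\widehat{\pi_{H_i}})=\sigma(\widehat{\pi_K})$, which does hold via the chaos decomposition). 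The problem is the step you build part (1) on: ``decompose $A$ spectrally\dots it suffices to show that for each $\lambda\neq0$ any $f(\omega,t)=e^{i\lambda t}g(\omega)$ lying in $A$ must vanish.'' A weak-$*$ closed, translation-invariant von Neumann subalgebra of $L^\infty(\hat{H}\times\R)$ is \emph{not} in general generated by its $\R$-eigenfunctions: elements can have purely continuous $t$-spectrum. Concretely, take any weakly mixing p.m.p.\ flow $(X,\nu,(S_h))$ and a Borel isomorphism $\phi:(\hat{H},\mu)\to(X,\nu)$; the map $F\mapsto F(S_t\phi(\omega))$ embeds $L^\infty(X)$ as a translation-invariant subalgebra $B$ with $B^{\R}=\C1$, with no nonzero eigenfunction for any $\lambda\neq0$, and yet $B\neq\C1$. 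So ``$A^{\R}=L^{\infty}(\hat{K})\otimes1$ and no eigenfunctions off $0$'' does not yield $A=L^{\infty}(\hat{K})\otimes1$. To close this you would have to work with the spectral subspaces $A(U)$ for compact intervals $U\not\ni0$ — i.e.\ with $f\ast_t\varphi$ for $\widehat{\varphi}$ supported away from $0$, which still lies in $A$ and in each $\mathcal{M}(\eta_n+K_n)$ with $s$-spectrum in $U$ — and rerun your Gaussian independence estimate for such elements (which are no longer of the product form $e^{i\lambda t}g(\omega)$), then conclude that the distributional $t$-spectrum of any $f\in A$ is $\{0\}$, hence $f$ is constant in $t$. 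That is a genuinely harder computation than the one you wrote.

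There is a second, independent gap in your finite-family induction. For two subspaces you write $\zeta=\xi_2-\xi_1=u+v$ with $u\in H_1+H_2$ and $0\neq v\in(H_1+H_2)^{\perp}$. But $M=\emptyset$ is equivalent to $\zeta\notin H_1+H_2$ as an \emph{algebraic} sum, and $H_1+H_2$ need not be closed; if $\zeta\in\overline{H_1+H_2}\setminus(H_1+H_2)$ then $v=0$ and your independence/conditioning step produces nothing. This configuration (two disjoint closed affine subspaces at distance zero) is covered neither by your two-subspace argument nor by your infinite-family argument, so even the finite case of part (1) is incomplete as written. By contrast, your treatment of the decreasing countable chain $\eta_n+K_n$ with $\|\eta_n\|\to\infty$ is the right mechanism and is essentially how the source handles the degenerate situations; I would suggest reorganising the proof so that \emph{every} instance of $M=\emptyset$ is funnelled through a statement of that type rather than through an orthogonal splitting of $\xi_2-\xi_1$.
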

\begin{remark} Recall that a nonsingular action $\sigma: G \curvearrowright X$ is weakly mixing if the diagonal action $\sigma \otimes \rho : G\curvearrowright X \otimes Y$ is ergodic for every ergodic probability measure preserving(in short pmp) action $\rho:G \curvearrowright Y$.\\
Let us also recall that a locally compact group is periodic if every cyclic subgroup is relatively compact. So a locally compact group is non-periodic if and only if it contains a discrete subgroup isomorphic to $\Z$.
\end{remark} 

We can now state the main proposition which will be proved below:

\begin{proposition}\label{weaklymixing}
Let $G$ be a l.c.s.c. group which is non-periodic. Let $\pi: G\to \mathcal{O}(\mathcal{H})$ be a representation of class $C_0$ and $c\in Z^1(\pi,\mathcal{H})$ be an evanescent $1$-cocycle that is not a coboundary. Then the skew product action $\beta$ defined by (\ref{beta}) is weakly mixing.
\end{proposition}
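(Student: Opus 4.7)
By definition, to show that $\beta$ is weakly mixing I must show that for every ergodic probability measure preserving action $\rho : G \curvearrowright (Y, \eta)$, the diagonal action $\beta \otimes \rho$ on $(\Omega \times Y, \nu \otimes \eta)$ is ergodic, that is, every $F \in L^\infty(\Omega \times Y)$ invariant under $\beta \otimes \rho$ is essentially constant. Since the $\mathbb{R}$-translation action $T_r(\omega,t) = (\omega, t+r)$ commutes with $\beta$, the Koopman representation of $\beta$ on $L^2(\Omega)$ decomposes with respect to this commuting $\mathbb{R}$-action; concretely, taking the partial Fourier transform in $t$, and using the cocycle identity $\pi(g)c(g^{-1}) = -c(g)$, the invariance of $F$ becomes the family of identities
\[
\hat F_s(\omega, y) = e^{2\pi i s \langle \omega, c(g)\rangle}\,\hat F_s\bigl(\hat\pi(g^{-1})\omega,\, \rho_{g^{-1}} y\bigr)
\qquad (s \in \mathbb{R},\; g \in G).
\]
Hence each $\hat F_s \in L^2(\hat{\mathcal{H}} \times Y)$ is an invariant vector of the unitary representation $\pi_s \otimes \rho$, where $(\pi_s(g)\phi)(\omega) = e^{2\pi i s \langle \omega, c(g)\rangle} \phi(\hat\pi(g^{-1})\omega)$. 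The goal is to prove $\hat F_0$ is essentially constant and $\hat F_s = 0$ for $s \neq 0$; Fourier inversion then yields that $F$ itself is essentially constant.

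\textbf{The case $s = 0$.} Here $\pi_0$ is the Koopman representation of the Gaussian action $\hat\pi$, which is strongly mixing because $\pi$ is $C_0$. A mixing p.m.p.\ representation tensored with any ergodic p.m.p.\ representation produces an ergodic action, so $\hat F_0$ is essentially constant.

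\textbf{The case $s \neq 0$.} The plan is to follow the strategy of Theorem~9.1.3 in \cite{amine1}. Using Proposition~2.10 of \cite{amine2}, fix a decreasing sequence of closed $\pi(G)$-invariant subspaces $H_n$ with $\bigcap_n H_n = \{0\}$ and vectors $\xi_n \in \mathcal{H}$ such that $c_n := c - d\xi_n$ takes values in $H_n$. A direct computation shows that the measure-preserving map $\Phi_n(\omega, t) = (\omega, t + \langle \omega, \xi_n\rangle)$ conjugates the skew product action of $c$ into the skew product action of $c_n$; since $c_n$ couples the $\mathbb{R}$-coordinate only to the $H_n$-part of $\omega$, pulling $F$ through $\Phi_n^{-1}$ places the resulting function in the subalgebra $\mathcal{M}(-\xi_n + H_n) \overline{\otimes} L^\infty(Y)$ of Proposition~2.7 of \cite{amine2}, uniformly in $n$. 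Since $c$ is not a coboundary, the affine action $\alpha = \pi + c$ has no fixed point; a short calculation shows that a common point of the affine subspaces $-\xi_n + H_n$ would produce such a fixed point, so $\bigcap_n (-\xi_n + H_n) = \varnothing$. Proposition~2.7(1) then collapses the intersection of algebras to $\mathbb{C} \overline{\otimes} L^\infty(Y)$, so $F$ depends only on $y$; ergodicity of $\rho$ forces $F$ to be essentially constant, which at the Fourier level means $\hat F_s = 0$.

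\textbf{Main obstacle.} The delicate point is to make the previous paragraph rigorous in the locally compact setting, and in particular to rule out all nonzero $(\pi_s \otimes \rho)$-invariant vectors for $s \neq 0$. The discrete argument of \cite{amine1} uses the powers of a single non-torsion element to index the relevant sequence of affine subspaces; here the non-periodicity hypothesis is precisely the substitute, providing a closed discrete copy of $\mathbb{Z} \leq G$ along which $c$ is unbounded (because $c$ is not a coboundary and $\pi$ is $C_0$, so the orbit $\{c(g) : g \in G\}$ cannot be bounded, and an unbounded cocycle sequence must escape every compact subset of $G$). Combining this discrete sequence with the evanescent data $(H_n, \xi_n)$ and tracking the extra $Y$-variable through Proposition~2.7, in the presence of the $s \neq 0$ twist $e^{2\pi i s \langle \omega, c(g)\rangle}$, is the technical heart of the proof.
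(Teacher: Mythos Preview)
Your outline contains the right endgame (the evanescence data $(H_n,\xi_n)$ together with Proposition~2.7 of \cite{amine2}), but the step that actually carries the weight of the proof is missing, and the Fourier detour does not supply it.

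The claim that conjugating by $\Phi_n$ ``places the resulting function in $\mathcal{M}(-\xi_n+H_n)\,\overline{\otimes}\,L^\infty(Y)$'' is not justified. What $\Phi_n$ buys you is that $F\circ\Phi_n^{-1}$ is invariant under the skew product built from $c_n$, whose cocycle takes values in $H_n$. But this invariant function still lives a priori in $L^\infty(\widehat{H}\times\mathbb{R}\times Y)=L^\infty(\widehat{H_n^\perp})\,\overline{\otimes}\,L^\infty(\widehat{H_n}\times\mathbb{R}\times Y)$, and nothing you have said forces it to be independent of the $\widehat{H_n^\perp}$-coordinate. That independence is exactly the content of equation~(\ref{equation 4.11}) in the paper, and it is \emph{not} automatic: the action on the second factor $\widehat{H_n}\times\mathbb{R}\times Y$ is infinite measure preserving, so the usual ``mixing $\otimes$ ergodic is ergodic'' argument for p.m.p.\ actions does not apply. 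The paper obtains (\ref{equation 4.11}) via Theorem~\ref{7.13 Amine}: the Gaussian action on $\widehat{H_n^\perp}$ is mixing (since $\pi$ is $C_0$), and the discrete $\mathbb{Z}\le G$ furnished by non-periodicity acts \emph{conservatively} on $\widehat{H_n}\times\mathbb{R}\times Y$ by Theorem~9.1.2 of \cite{amine1}, hence is recurrent there. You mention the copy of $\mathbb{Z}$ but never invoke conservativity/recurrence, which is precisely the mechanism through which non-periodicity enters.

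A secondary point: the partial Fourier transform is not doing real work. An $L^\infty$ invariant $F$ need not lie in $L^2(dt)$, so $\hat F_s$ is not defined without further care; and in your $s\ne 0$ paragraph you immediately revert to arguing about $F$ and the subalgebras $\mathcal{M}(\cdot)$, which is the paper's direct approach. Once the missing step above is supplied, the Fourier decomposition becomes unnecessary.
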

We need one more result from \cite{amine2} which generalizes theorem $2.3$ of \cite{schmidt1982mildly}.
\begin{definition} Let $\pi: G\to \mathcal{O}(H)$ be an orthogonal representation. A countable subset $\Lambda \subset G$ is called mixing with respect to $\pi$ if for every $\xi,\eta \in H$, one has :
$$\lim_{g\in \Lambda, g\to \infty} \langle \pi(g)\xi, \eta\rangle=0.$$
\end{definition}
\begin{definition}Let $\sigma:G\curvearrowright X$ be a nonsingular action. A countable set $\Lambda \subset G$ is called recurrent with respect to $\sigma$ if for every $B\subset X$ with $B\neq 0$ there exists infinitely many $g\in \Lambda$ such that $gB\cap B\neq 0$.
\end{definition}
\begin{theorem}{(Theorem 7.13, \cite{amine2})}\label{7.13 Amine} Let $\sigma:G\curvearrowright X$ be a nonsingular action and $\rho:G\curvearrowright(Y,\nu)$ a pmp action. Suppose that there exists a subset $\Lambda \subset G$ such that $\Lambda$ is recurrent with respect to $\sigma$ and mixing with respect to $\rho$. Then every $\sigma \otimes \rho$-invariant function in $L^{\infty}(X\otimes Y)$ is contained in $L^{\infty}(X)$.
\end{theorem}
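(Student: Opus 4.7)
Fix an arbitrary ergodic probability-measure-preserving action $\rho : G \curvearrowright (Y,\nu_Y)$; the goal is to establish that the diagonal action $\beta\otimes\rho$ on $(\Omega\times Y, \nu\otimes\nu_Y)$ is ergodic. The plan is to combine the two tools just recalled: Theorem 7.13 of \cite{amine2}, which reduces ergodicity of $\beta\otimes\rho$ to the existence of a countable $\Lambda\subset G$ that is simultaneously recurrent for $\beta$ and mixing for $\rho$ (together with ergodicity of $\beta$ itself), and Proposition 2.7 of \cite{amine2}, which identifies intersections of the Gaussian-type subalgebras $\mathcal{M}(\xi+K)$ of $L^\infty(\hat{\mathcal{H}}\times\mathbb{R})$. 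The non-periodicity hypothesis plays here, in the l.c.s.c.\ setting, the role of the non-torsion element used in Theorem 9.1.3 of \cite{amine1}.

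Since $G$ is non-periodic, pick $g_0\in G$ generating a discrete subgroup isomorphic to $\mathbb{Z}$, and set $\Lambda_0=\{g_0^n : n\in\mathbb{Z}\}$. Recurrence of $\beta$ along $\Lambda_0$ follows from non-coboundariness of $c$: the restricted affine action $\alpha|_{\langle g_0\rangle}$ still has no fixed point in $\mathcal{H}$, so the associated $\mathbb{Z}$-Gaussian skew product on $\Omega$ is conservative by the arguments of \cite{amine1}, and conservativity gives recurrence on every set of positive finite measure. To upgrade $\Lambda_0$ to a subset $\Lambda$ that is also mixing for $\rho$, I extract by a diagonal procedure a subsequence $\Lambda=\{g_0^{n_k}\}$ along which the matrix coefficients of the Koopman representation of $\rho$ on $L^2(Y)\ominus\mathbb{C}$ tend to zero against a fixed countable dense subset. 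Once such $\Lambda$ is in hand, Theorem 7.13 yields that every $\beta\otimes\rho$-invariant element of $L^\infty(\Omega\times Y)$ lies in $L^\infty(\Omega)$, reducing the problem to ergodicity of $\beta$ on $\Omega$ alone.

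For this final step, I apply Proposition 2.7 to the family of closed affine subspaces of $\mathcal{H}$ that parameterizes the $\beta$-fixed subalgebra: taking the $\pi(G)$-invariant subspaces furnished by the evanescence of $c$ as the $H_i$'s, together with vectors $\xi_i\in\mathcal{H}$ obtained from the affine action $\alpha$, one identifies $L^\infty(\Omega)^\beta = \bigcap_i \mathcal{M}(\xi_i + H_i)$. Non-coboundariness of $c$ means $\alpha$ admits no fixed point, so the affine intersection $M=\bigcap_i(\xi_i+H_i)$ is empty, and Proposition 2.7 gives $L^\infty(\Omega)^\beta = L^\infty(\hat{K})\otimes 1$ with $K=\bigcap_i H_i$; evanescence of $c$ forces $K=\{0\}$, so $L^\infty(\Omega)^\beta = \mathbb{C}$ and $\beta$ is ergodic. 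The main obstacle is the simultaneous construction of $\Lambda$ in the second paragraph — namely, verifying that the conservativity of $\beta|_{\langle g_0\rangle}$ persists along a subsequence extracted to kill the matrix coefficients of $\rho$. This is exactly the point where non-periodicity (and not merely non-compactness) is essential, since it provides the discrete $\mathbb{Z}$-subgroup on which the Gaussian conservativity arguments of \cite{amine1} can be imported verbatim into the locally compact setting.
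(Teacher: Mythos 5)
Your proposal does not prove the statement at hand. The statement is Theorem 7.13 of \cite{amine2} itself: for a nonsingular action $\sigma$ and a pmp action $\rho$, the existence of a set $\Lambda\subset G$ that is recurrent for $\sigma$ and mixing for $\rho$ forces every $\sigma\otimes\rho$-invariant function in $L^{\infty}(X\times Y)$ to lie in $L^{\infty}(X)$. What you have written instead is an argument for Proposition \ref{weaklymixing} (weak mixing of the skew product $\beta$), and in the second paragraph you explicitly \emph{invoke} Theorem 7.13 as one of your two tools. Relative to the assigned statement this is circular: you assume the conclusion you were asked to establish. The surrounding material (non-periodicity of $G$, the cocycle $c$, evanescence, Proposition \ref{2.7 Amine}) is not part of the hypotheses of Theorem 7.13 and plays no role in its proof; it belongs to the application.

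For the record, the paper does not prove this theorem either --- it is imported from \cite{amine2}, where it generalizes Theorem 2.3 of \cite{schmidt1982mildly} --- so there is no internal proof to compare against. A correct proof would be a Schmidt--Walters type argument carried out directly on an invariant function: take $F\in L^{\infty}(X\times Y)$ invariant under $\sigma\otimes\rho$, decompose it fiberwise as $F(x,\cdot)=F_0(x)\mathbf{1}+F_1(x,\cdot)$ with $F_1(x,\cdot)$ orthogonal to the constants in $L^2(Y,\nu)$, and show $F_1=0$: if $F_1$ were nonzero on a set $B\subset X$ of positive measure, recurrence of $\Lambda$ with respect to $\sigma$ supplies infinitely many $g\in\Lambda$ with $gB\cap B$ of positive measure, while invariance of $F$ and mixing of $\Lambda$ with respect to $\rho$ force the correlations $\langle \rho(g)F_1(x,\cdot),F_1(\sigma_g x,\cdot)\rangle$ to tend to $0$ along $\Lambda$, a contradiction. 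None of this appears in your write-up. Separately, even as a proof of Proposition \ref{weaklymixing} your sketch has a misstep: recurrence of $\beta$ along $\Lambda_0=\langle g_0\rangle$ is not used in the paper's argument on the whole of $\Omega$; it is applied to the action on $\widehat{K^{\perp}}\times\R\times Z$ after splitting off a $\pi(G)$-invariant subspace $K$, and the mixing set is taken with respect to the Gaussian action on $\hat K$ (using that $\pi$ is $C_0$), not extracted as a subsequence killing matrix coefficients of an arbitrary $\rho$.
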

With this result we are able to prove \cref{weaklymixing}:
\begin{proof}[Proof of proposition \ref{weaklymixing}]
Let $\sigma:G \curvearrowright (Z,\rho)$ be any pmp action and consider the diagonal action $\gamma:G\curvearrowright (\hat{H}\times \R)\times Z$. First assume that $K\subset H$ is a closed $\pi(G)$-invariant subspace and that $c(g)\in K^{\perp}$ for all $g\in G$. Identifying $\hat{H}$ with $\hat{K}\times \hat{K}^{\perp}$, we can see $\gamma$ as the Gaussian action $\widehat{\pi|_{K}}: G\curvearrowright \widehat{K}$, the skew product action $G\curvearrowright \widehat{K^{\perp}}\times \R$ and the action $G\curvearrowright Z$. Take any infinite cyclic discrete subgroup $\Lambda$ of $G$. Then by theorem $9.1.2$ of \cite{amine1}, the action $\Lambda\curvearrowright \widehat{K^{\perp}}\times \R \times Z$ is conservative. Hence $\Lambda$ is recurrent with respect to this action. Moreover by assumption, the action $G\curvearrowright \hat{K}$ is mixing. Then $\Lambda$ is mixing with respect to it. By theorem \ref{7.13 Amine}, we can conclude that 
\begin{equation}\label{equation 4.11}L^{\infty}(\hat{H}\times \R\times Z)^{G}=1\otimes L^{\infty}(\widehat{K^{\perp}}\times \R\times Z)^G.
\end{equation}
Therefore we are in the same situation of the proof in \cite{amine1}. We can then conclude in the same way. By our assumption $c$ is evanescent. Thus there exists a decreasing sequence of closed $\pi(G)$-invariant subspaces $H_n\subset H$ and vectors $\xi_n\in H_n^{\perp}$ such that $$c(g)+\pi(g)\xi_n-\xi_n\in H_n,$$
for all $g\in G$ and $n \in \N$. Since $H$ does not admit nonzero $\pi(G)$-invariant vectors, the vectors $\xi_n$ are uniquely determined and $P_{H_n^{\perp}}(\xi_m)=\xi_n$ for all $m\ge n$. Since $c$ is not a coboundary, we have that $\|\xi_n\|\to +\infty$ when $n$ tends to infinity. Using (\ref{equation 4.11})  we get that for every $n$:
$$L^{\infty}(\hat{H}\times \R\times Z)^{G}\subset \mathcal{M}(\xi_n+H_n)\overline{\otimes} L^{\infty}(Z).$$
With the notations of proposition \ref{2.7 Amine}, we have that $M=\emptyset$ and therefore $A=1$. So we get that
$$L^{\infty}(\hat{H}\times \R\times Z)^{G}\subset 1\otimes L^{\infty}(Z)^G=\C 1.$$
So $\beta$ is weakly mixing.
\end{proof}
We end this section with the following remark which concludes the proof of the ergodicity.
\begin{remark}
A locally compact group, non-compact group $G$ admits an evanescent affine isometric action without fixed point whose linear part is mixing if and only if it has the Haagerup property. Indeed this follows from propositions $2.12$ and $2.13$ of \cite{amine2}.
So groups with Haagerup property is a class of groups for which, under the non-periodicity assumption, proposition \ref{weaklymixing} can be applied.
\end{remark}
\subsection{Examples of dynamical systems for groups acting properly on trees}
A typical example of groups which it is easy to construct a $C_0$-dynamical system with an invariant mean as in theorem \ref{THM DJZ 2} is the amenable groups. Indeed it suffices to consider the left regular representation (see proposition 1.4 \cite{delabie2019new} for example). The advantage of using Gaussian functor to prove this theorem is that it gives us a way to make explicit such system when we are considering groups acting properly on trees. We begin by some recalls in order to fix the notation. We refer to Chapter $2$ and Appendix $C$ of  \cite{bekka2008kazhdan} for more details.\\
Let $T=(V,E)$ a locally finite tree. We will identify $T$ with its set of vertices. We denote by $\E$ the set of oriented edges, where each geometric edge comes with two possible orientations. For $x,y\in V$, let $[x,y]$ be the set of oriented edges on the unique geodesic path from $x$ to $y$. We say that $e\in [x,y]$ is coherent if $e$ points from $x$ to $y$ and incoherent otherwise. There exists a unique embedding $\iota:T\to H$ into an affine Hilbert space $H$ such that :
\begin{enumerate}
\item $d(x,y)=\|\iota(x)- \iota(y)\|^2$;
\item $H$ is the closed affine span of $\iota(T)$.
\end{enumerate}
We will view $T$ directly as a subset of $H$.
Suppose that $G$ acts properly on $T$ by automorphisms, then by the uniqueness of the embedding, the action of $G$ extends uniquely to an affine isometric action $\alpha: G\curvearrowright H$ with $\alpha_g(\iota(x))=\iota(gx).$ Moreover as the action of $G$ on $T$ is proper, then combining proposition $8.6$ and $9.3$ of \cite{amine2}, we deduce that $\alpha$ is proper.  Hence we get:\\
\begin{example}
Let $G$ be a group acting properly on a locally finite tree $T$.
Let $\alpha: G\curvearrowright H$ be the affine isometric proper action associated. Therefore $$(\hat{H},\mu)=\left(\R, \frac{1}{\sqrt{2\pi}}\exp\left(-\frac{x^2}{2}\right)dx \right)^{\otimes T}.$$
This comes from the standard construction, thanks to property $(2)$ of the embedding, which can be found in \cite{cherix2012groups} Chapter $2$. Thus for the free group on two generators, $\mathbb{F}_2$, let $\alpha: \mathbb{F}_2 \curvearrowright H$ be the affine isometric action associated to its action on its Cayley graph. Then an example of infinite measure space as in theorem \ref{THM DJZ} is given by : 
$$\left(\R, \frac{1}{\sqrt{2\pi}}\exp\left(-\frac{x^2}{2}\right)dx \right)^{\otimes \mathbb{F}_2} \otimes (\R, \lambda).$$
This answers to a question from Y.Stalder.
\end{example} 
For these groups, it is possible to give an alternative way to find such a measure space. 
Consider $\Omega(T)$ the compact space of all possible orientations of $T$. Recall that an orientation of $T$ is a map $\omega : E(T)\to \left\{-1,1\right\}$. Then we define for every $x,y\in T$ the continuous function $c(x,y)$ on $\Omega(T)$ by the formula:
$$c(x,y)(\omega)=\sum_{e\in [x,y]} \omega(e)=n_e-m_e,$$
where $n_e$ (respectively $m_e$) is the number of edges of $[x,y]$ which is coherent (respectively incoherent) by the orientation $\omega\in \Omega(T)$. Observe that we have the cocycle relation, i.e. $c(x,y)+c(y,z)=c(x,z)$ for all $x,y,z\in T$. Fix a base-point $x_0\in T$ and define a random orientation of $T$ as follows: each edge $e\in E(T)$ independently is oriented towards $x_0$ with probability $p$ and away from $x_0$ with probability $1-p$. This defines a probability measure on $\Omega(T)$ denoted $\mu_{x_0}^p$. 
Thus we can see $(\Omega(T),\mu_{x_0}^p)=\prod_{e\in E(T_4)}(\left\{-1,1\right\},p\delta_1 + (1-p)\delta_{-1})$. Now we consider the cocycle $c\colon G\times \Omega \to \Z$ defined by:
$$c(g,\omega)=c(x_0,gx_0)(\omega).$$
Therefore we have $c(gh,\omega)=c(g,\omega)+c(h,g^{-1}\omega).$
Finally take the skew product action of $G$ on $\left(\Omega(T)\times \R, \mu\otimes \lambda\right)$, given by :
$$\beta_g(\omega, t)=(\omega, t+ c(g^{-1},\omega)).$$
To prove that the permutation representation is $C_0$, denote $X$ a Bernoulli random variable with values in $\left\{-1,1\right\}$ with parameter $p$ and let $(X_e)_{e\in E(T)}$ be a sequence of i.i.d. random variables, indexed by the edges, with the same distribution as $X$. Let $(g_n)$ be a sequence in $G$ which tends to infinity. For each $n$, set $$S_n:=S_{g_nx_0}=\sum_{e\in [x_0,g_nx_0]} X_e.$$
As the action of $G$ on $T$ is proper, we get a random process $(S_n)_{n\ge 1}$ which is nothing but a kind of random walk on $\Z$. Put $p\neq \frac{1}{2}$ in order to have that $S_{\infty}$ can be simulated by a Markov chain, which is a transient walk. Thus :
$$\mathbb{P}(\lim_{n\to \infty } |S_n|=+\infty)=1.$$
Hence for almost every $\omega$, the cocycle $c(g,\omega)$ is proper. Then it suffices to consider for example the vector $\xi=1\otimes h\in L^2(\Omega\times \R, \mu_{x_0}^p\otimes \lambda)$, with $h(t)=e^{-t^2}$. One has :
$$\langle \pi(g_n)\xi,\xi\rangle=\int_{\Omega\times \R} e^{-(t+c(g_n,\omega))^2}e^{-t^2}d(\mu_{x_0}^p\otimes \lambda)(\omega,t)\le C\int_{\Omega} e^{-\frac{c(g_n,\omega)^2}{2}} d\mu_{x_0^p}(\omega),$$
for some constant $C>0$. 
Using the fact that the cocycle is proper almost everywhere and dominated convergence, we deduce: $$\lim_{n\rightarrow \infty}\langle \pi(g_n)\xi,\xi\rangle=0.$$
Since the action on the second variable mimics the behavior of the translation action of $\Z$ on $\R$, then the Koopman representation $\pi$ associated to $\beta$ admits almost invariant vectors. Indeed consider for each $n$: $$\xi_n=1\otimes \frac{1}{\sqrt{2n}}\mathbf{1}_{[-n,n]},$$
then we have :
$$\langle \pi(g) \xi_n, \xi_n\rangle= \int_{\Omega\times \R} \mathbf{1}_{[-n,n]}(t+S_{gx_0}(\omega))\mathbf{1}_{[-n,n]}(t)d(\mu\otimes \lambda)(\omega,t).$$
Using Fubini-Tonelli, we get :
$$\langle \pi(g) \xi_n, \xi_n\rangle=\int_{\Omega} \frac{2n-|S_{gx_0}(\omega)|}{2n}d\mu(\omega).$$
Now let $K$ be a compact subset of $G$. It rests to apply dominated convergence to conclude that $$\sup_{g\in K} \langle \pi(g) \xi_n, \xi_n\rangle\xrightarrow{n\to \infty} 1.$$
Hence for the free group $\mathbb{F}_2$, we obtain:
\begin{example}\label{example groupe libre}
Let $\sigma:\mathbb{F}_2\curvearrowright T_4$ its action on its Cayley graph. Then an explicit example of infinite measure space in the hypothesis of theorem  \cite{delabie2019new} is given by:
$$(\Omega(T_4)\times \R, \mu \otimes \lambda).$$
\end{example}

\bibliographystyle{alpha}
\bibliography{biblioVNA}

\end{document}